\begin{document}

\allowdisplaybreaks
\hyphenation{non-archi-me-dean}
\newif\ifdraft \drafttrue
\draftfalse
\newcommand{\DRAFTNUMBER}{1}
\newcommand{\DATE}{\today}
\newcommand{\TITLE}{Examples of dynamical degree equals arithmetic degree}
\newcommand{\TITLERUNNING}{Dynamical degree equals arithmetic degree}



\newtheorem{theorem}{Theorem}
\newtheorem{lemma}[theorem]{Lemma}
\newtheorem{conjecture}[theorem]{Conjecture}
\newtheorem{proposition}[theorem]{Proposition}
\newtheorem{corollary}[theorem]{Corollary}
\newtheorem*{claim}{Claim}

\theoremstyle{definition}
\newtheorem*{definition}{Definition}
\newtheorem{remark}[theorem]{Remark}
\newtheorem{example}[theorem]{Example}
\newtheorem{question}[theorem]{Question}

\theoremstyle{remark}
\newtheorem*{acknowledgement}{Acknowledgements}

\newcommand\CaseAlt[2]{\par\vspace{1ex minus 0.5ex}\noindent%
\framebox{\textbf{Case #1}.\enspace\emph{#2}}\par\noindent\ignorespaces}

\newcommand\Case[1]{\par\vspace{1ex minus 0.5ex}\noindent%
\textbf{Case #1}:\enspace\ignorespaces}



\newenvironment{notation}[0]{%
  \begin{list}%
    {}%
    {\setlength{\itemindent}{0pt}
     \setlength{\labelwidth}{4\parindent}
     \setlength{\labelsep}{\parindent}
     \setlength{\leftmargin}{5\parindent}
     \setlength{\itemsep}{0pt}
     }%
   }%
  {\end{list}}

\newenvironment{parts}[0]{%
  \begin{list}{}%
    {\setlength{\itemindent}{0pt}
     \setlength{\labelwidth}{1.5\parindent}
     \setlength{\labelsep}{.5\parindent}
     \setlength{\leftmargin}{2\parindent}
     \setlength{\itemsep}{0pt}
     }%
   }%
  {\end{list}}
\newcommand{\Part}[1]{\item[\upshape#1]}

\renewcommand{\a}{\alpha}
\newcommand{\aupper}{\overline{\alpha}}
\newcommand{\alower}{\underline{\alpha}}
\renewcommand{\b}{\beta}
\newcommand{\bfbeta}{{\boldsymbol{\beta}}}
\newcommand{\g}{\gamma}
\renewcommand{\d}{\delta}
\newcommand{\e}{\epsilon}
\newcommand{\bfepsilon}{\boldsymbol{\epsilon}}
\newcommand{\f}{\varphi}
\newcommand{\bfphi}{{\boldsymbol{\f}}}
\renewcommand{\l}{\lambda}
\newcommand{\bfl}{{\boldsymbol{\lambda}}}
\renewcommand{\k}{\kappa}
\newcommand{\lhat}{\hat\lambda}
\newcommand{\m}{\mu}
\newcommand{\bfmu}{{\boldsymbol{\mu}}}
\renewcommand{\o}{\omega}
\renewcommand{\r}{\rho}
\newcommand{\rbar}{{\bar\rho}}
\newcommand{\s}{\sigma}
\newcommand{\sbar}{{\bar\sigma}}

\renewcommand{\t}{\tau}
\newcommand{\z}{\zeta}

\newcommand{\D}{\Delta}
\newcommand{\G}{\Gamma}
\newcommand{\F}{\Phi}
\renewcommand{\L}{\Lambda}
\newcommand{\ga}{{\mathfrak{a}}}
\newcommand{\gb}{{\mathfrak{b}}}
\newcommand{\gn}{{\mathfrak{n}}}
\newcommand{\gp}{{\mathfrak{p}}}
\newcommand{\gP}{{\mathfrak{P}}}
\newcommand{\gq}{{\mathfrak{q}}}

\newcommand{\Abar}{{\bar A}}
\newcommand{\Ebar}{{\bar E}}
\newcommand{\Kbar}{{\bar K}}
\newcommand{\Pbar}{{\bar P}}
\newcommand{\Sbar}{{\bar S}}
\newcommand{\Tbar}{{\bar T}}
\newcommand{\ybar}{{\bar y}}
\newcommand{\phibar}{{\bar\f}}

\newcommand{\ftilde}{{\tilde f}}

\newcommand{\Acal}{{\mathcal A}}
\newcommand{\Bcal}{{\mathcal B}}
\newcommand{\Ccal}{{\mathcal C}}
\newcommand{\Dcal}{{\mathcal D}}
\newcommand{\Ecal}{{\mathcal E}}
\newcommand{\Fcal}{{\mathcal F}}
\newcommand{\Gcal}{{\mathcal G}}
\newcommand{\Hcal}{{\mathcal H}}
\newcommand{\Ical}{{\mathcal I}}
\newcommand{\Jcal}{{\mathcal J}}
\newcommand{\Kcal}{{\mathcal K}}
\newcommand{\Lcal}{{\mathcal L}}
\newcommand{\Mcal}{{\mathcal M}}
\newcommand{\Ncal}{{\mathcal N}}
\newcommand{\Ocal}{{\mathcal O}}
\newcommand{\Pcal}{{\mathcal P}}
\newcommand{\Qcal}{{\mathcal Q}}
\newcommand{\Rcal}{{\mathcal R}}
\newcommand{\Scal}{{\mathcal S}}
\newcommand{\Tcal}{{\mathcal T}}
\newcommand{\Ucal}{{\mathcal U}}
\newcommand{\Vcal}{{\mathcal V}}
\newcommand{\Wcal}{{\mathcal W}}
\newcommand{\Xcal}{{\mathcal X}}
\newcommand{\Ycal}{{\mathcal Y}}
\newcommand{\Zcal}{{\mathcal Z}}

\renewcommand{\AA}{\mathbb{A}}
\newcommand{\BB}{\mathbb{B}}
\newcommand{\CC}{\mathbb{C}}
\newcommand{\FF}{\mathbb{F}}
\newcommand{\GG}{\mathbb{G}}
\newcommand{\NN}{\mathbb{N}}
\newcommand{\PP}{\mathbb{P}}
\newcommand{\QQ}{\mathbb{Q}}
\newcommand{\RR}{\mathbb{R}}
\newcommand{\ZZ}{\mathbb{Z}}

\newcommand{\bfa}{{\boldsymbol a}}
\newcommand{\bfb}{{\boldsymbol b}}
\newcommand{\bfc}{{\boldsymbol c}}
\newcommand{\bfe}{{\boldsymbol e}}
\newcommand{\bff}{{\boldsymbol f}}
\newcommand{\bfg}{{\boldsymbol g}}
\newcommand{\bfh}{{\boldsymbol h}}
\newcommand{\bfp}{{\boldsymbol p}}
\newcommand{\bfr}{{\boldsymbol r}}
\newcommand{\bfs}{{\boldsymbol s}}
\newcommand{\bft}{{\boldsymbol t}}
\newcommand{\bfu}{{\boldsymbol u}}
\newcommand{\bfv}{{\boldsymbol v}}
\newcommand{\bfw}{{\boldsymbol w}}
\newcommand{\bfx}{{\boldsymbol x}}
\newcommand{\bfy}{{\boldsymbol y}}
\newcommand{\bfz}{{\boldsymbol z}}
\newcommand{\bfA}{{\boldsymbol A}}
\newcommand{\bfC}{{\boldsymbol C}}
\newcommand{\bfF}{{\boldsymbol F}}
\newcommand{\bfB}{{\boldsymbol B}}
\newcommand{\bfD}{{\boldsymbol D}}
\newcommand{\bfE}{{\boldsymbol E}}
\newcommand{\bfG}{{\boldsymbol G}}
\newcommand{\bfI}{{\boldsymbol I}}
\newcommand{\bfM}{{\boldsymbol M}}
\newcommand{\bfzero}{{\boldsymbol{0}}}

\newcommand{\Amp}{\operatorname{Amp}}
\newcommand{\Aut}{\operatorname{Aut}}
\newcommand{\bad}{\textup{bad}}
\newcommand{\Disc}{\operatorname{Disc}}
\newcommand{\dist}{\Delta}  
\newcommand{\Div}{\operatorname{Div}}
\newcommand{\End}{\operatorname{End}}
\newcommand{\Eff}{\operatorname{Eff}}
\newcommand{\Family}{{\mathcal A}}  
\newcommand{\Fatou}{{\mathcal F}}
\newcommand{\Fbar}{{\bar{F}}}
\newcommand{\Fix}{\operatorname{Fix}}
\newcommand{\Gal}{\operatorname{Gal}}
\newcommand{\GL}{\operatorname{GL}}
\newcommand{\good}{\textup{good}}
\newcommand{\hplus}{h^{\scriptscriptstyle+}}
\newcommand{\Index}{\operatorname{Index}}
\newcommand{\Image}{\operatorname{Image}}
\newcommand{\interior}{\operatorname{int}}
\newcommand{\Julia}{{\mathcal J}}
\newcommand{\liftable}{{\textup{liftable}}}
\newcommand{\hhat}{{\hat h}}
\newcommand{\hhatlower}{\underline{\hat h}}
\newcommand{\hhatplus}{{\hat h^{\scriptscriptstyle+}}}
\newcommand{\hhatminus}{{\hat h^{\scriptscriptstyle-}}}
\newcommand{\hhatpm}{{\hat h^{\scriptscriptstyle\pm}}}
\newcommand{\Ker}{{\operatorname{ker}}}
\newcommand{\Lift}{\operatorname{Lift}}
\newcommand{\mG}{\hat{g}}        
\newcommand{\MOD}[1]{~(\textup{mod}~#1)}
\newcommand{\Nef}{\operatorname{Nef}}
\newcommand{\Norm}{{\operatorname{\mathsf{N}}}}
\newcommand{\notdivide}{\nmid}
\newcommand{\normalsubgroup}{\triangleleft}
\newcommand{\NS}{{\operatorname{NS}}}
\newcommand{\odd}{{\operatorname{odd}}}
\newcommand{\onto}{\twoheadrightarrow}
\newcommand{\ord}{\operatorname{ord}}
\newcommand{\Orbit}{{\mathcal O}}
\newcommand{\PGL}{\operatorname{PGL}}
\newcommand{\Pic}{\operatorname{Pic}}
\newcommand{\Prob}{\operatorname{Prob}}
\newcommand{\psef}{\textup{psef}}
\newcommand{\Qbar}{{\bar{\QQ}}}
\newcommand{\rank}{\operatorname{rank}}
\newcommand{\Res}{{\operatorname{Res}}}
\newcommand{\Resultant}{\operatorname{Res}}
\newcommand{\rest}[2]{\left.{#1}\right\vert_{{#2}}}  
\renewcommand{\setminus}{\smallsetminus}
\newcommand{\Span}{\operatorname{Span}}
\newcommand{\Spec}{{\operatorname{Spec}}}
\newcommand{\Supp}{\operatorname{Supp}}
\newcommand{\tors}{{\textup{tors}}}
\newcommand{\Trace}{\operatorname{Trace}}
\newcommand{\UHP}{{\mathfrak{h}}}    

\newcommand{\longhookrightarrow}{\lhook\joinrel\longrightarrow}
\newcommand{\longonto}{\relbar\joinrel\twoheadrightarrow}

\newcommand{\halfquad}{\hspace{.5em}}


\title[\TITLERUNNING]{\TITLE}
\date{\DATE \ifdraft{} --- Draft \DRAFTNUMBER\fi}

\author[Shu Kawaguchi and Joseph H. Silverman]
  {Shu Kawaguchi and Joseph H. Silverman}
\email{kawaguch@math.kyoto-u.ac.jp, jhs@math.brown.edu}
\address{Department of Mathematics, Faculty of Science, Kyoto University, 
Kyoto, 606-8502, Japan}
\address{Mathematics Department, Box 1917
         Brown University, Providence, RI 02912 USA}
\subjclass{Primary: 37P15; Secondary: 37P05, 37P30, 37P55, 11G50, 32F10}
\keywords{arithmetic degree, dynamical degree, canonical height}
\thanks{The first author's research supported by JSPS
grant-in-aid for young scientists (B) 24740015.
The second author's research supported by NSF DMS-0854755
and Simons Foundation grant \#241309.}

\begin{abstract}
Let $f:X\dashrightarrow X$ be a dominant rational map of a projective
variety defined over~$\Qbar$. An important geoemtric-dynamical
invariant of~$f$ is its (first) dynamical degree~$\d_f=\lim
\rho((f^n)^*)^{1/n}$.  For points $P\in X(\Qbar)$ whose forward orbits
are well-defined, there is an analogous arithmetic degree
$\aupper_f(P)=\limsup h_X\bigl(f^n(P)\bigr)^{1/n}$, where~$h_X$ is an ample
Weil height on~$X$.  In an earlier paper, we proved the fundamental
inequality $\aupper_f(P)\le\d_f$ and conjectured that $\aupper_f(P)=\d_f$
whenever the orbit of~$P$ is Zariski dense. In this paper we show that
the conjecture is true for several types of maps.  In other cases, we
provide support for the conjecture by proving that there is a Zariski
dense set of points with disjoint orbits and satisfying
$\aupper_f(P)=\d_f$.
\end{abstract}


\maketitle

\section*{Introduction}
\label{section:intro}

Let $X/\CC$ be a projective variety, let $f:X\dashrightarrow X$ be a
dominant rational map, and let $f^*:\NS(X)_\RR\to\NS(X)_\RR$ be the
induced map on the N\'eron--Severi group
$\NS(X)_\RR=\NS(X)\otimes\RR$. Further let~$\rho(T,V)$ denote the
spectral radius of a linear transformation~$T:V\to V$. Then
the (\emph{first}) \emph{dynamical degree of~$f$} is the quantity
\[ 
  \d_f = \lim_{n\to\infty} \rho\bigl((f^n)^*,\NS(X)_\RR\bigr)^{1/n}.
\]
Dynamical degrees have been much studied over the past couple of
decades; see~\cite{kawsilv:arithdegledyndeg} for a partial list of
references.
\par
In two earlier papers~\cite{kawsilv:arithdegledyndeg,arxiv1111.5664},
the authors studied an analogous arithmetic degree, which we now
describe.  Assume that~$X$ and~$f$ are defined over~$\Qbar$, and write
$X(\Qbar)_f$ for the set of points~$P$ whose forward $f$-orbit 
\[
  \Orbit_f(P) = \{P,f(P),f^2(P),\ldots\}
\]
is well-defined.  Further, let $h_X:X(\Qbar)\to[0,\infty)$ be a Weil
height on~$X$ relative to an ample divisor,
and let $\hplus_X=\max\{1,h_X\}$.  The \emph{arithmetic
degree of~$f$ at~$P\in X(\QQ)_f$} is the quantity
\begin{equation}
  \label{eqn:limdefafP}
  \a_f(P) = \lim_{n\to\infty} \hplus_X\bigl(f^n(P)\bigr)^{1/n},
\end{equation}
assuming that the limit exists. We also define \emph{upper and lower
arithmetic degrees} by the formulas
\[
  \aupper_f(P) = \limsup_{n\to\infty} \hplus_X\bigl(f^n(P)\bigr)^{1/n}
  \quad\text{and}\quad
  \alower_f(P) = \liminf_{n\to\infty} \hplus_X\bigl(f^n(P)\bigr)^{1/n}.
\]
It is proven in~\cite{kawsilv:arithdegledyndeg} that the values
of~$\aupper_f(P)$ and~$\alower_f(P)$ are independent of the choice
of the height function~$h_X$.
\par
A principal result of~\cite{kawsilv:arithdegledyndeg} is 
the fundamental inequality
\begin{equation}
  \label{eqn:fundineq}
  \aupper_f(P)\le \d_f\quad\text{for all $P\in X(\QQ)_f$.}
\end{equation}
The papers~\cite{kawsilv:arithdegledyndeg,arxiv1111.5664} also contain
a number of conjectures, which we recall here. The conjectures give
additional properties and relations for arithmetic and
dynamical degrees.

\begin{conjecture}
\label{conjecture:afPeqdfY}
Let $X/\Qbar$ be a \textup(normal\textup) projective variety, let
$f:X\dashrightarrow X$ be a dominant rational map defined
over~$\Qbar$, and let~$P\in X(\Qbar)_f$.
\begin{parts}
\Part{(a)}
The limit~\eqref{eqn:limdefafP} defining~$\a_f(P)$ exists.
\Part{(b)}
If~$\Orbit_f(P)$ is Zariski dense in~$X$, then
$\a_f(P)=\d_f$.
\Part{(c)}
$\a_f(P)$ is an algebraic integer.
\Part{(d)}
The collection of arithmetic degrees 
\[
  \bigl\{\a_f(Q):Q\in X(\Qbar)_f\bigr\}
\]
is a finite set.
\end{parts}
\end{conjecture}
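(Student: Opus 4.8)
Since the last displayed statement is a conjecture, what I can realistically offer is the strategy by which it is proved for the structured classes of maps promised in the abstract, together with an honest account of where the general case breaks down.

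\textbf{Reduction and the crux.} The first move is to reduce everything to the case of a Zariski-dense orbit. Let $Y=\overline{\Orbit_f(P)}$ with reduced structure; after replacing $f$ by a suitable iterate (so that the component of $Y$ meeting the tail of $\Orbit_f(P)$ is $f$-invariant and irreducible), $f$ induces a dominant rational self-map $f|_Y:Y\dashrightarrow Y$ with $\Orbit_{f|_Y}(P)$ Zariski dense in $Y$, and the restriction to $Y$ of an ample Weil height on $X$ is comparable to an ample Weil height on $Y$; hence $\a_f(P),\aupper_f(P),\alower_f(P)$ are computed inside $Y$. Consequently parts~(a), (c), (d) follow by induction on $\dim X$ from the case of a dense orbit, while part~(b) \emph{is} that case. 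So assume $\Orbit_f(P)$ dense in $X$. By the fundamental inequality~\eqref{eqn:fundineq} we already have $\aupper_f(P)\le\d_f$, so it is enough to prove the matching lower bound
\[
  \alower_f(P)=\liminf_{n\to\infty}\hplus_X\bigl(f^n(P)\bigr)^{1/n}\ \ge\ \d_f ,
\]
which simultaneously forces the limit~\eqref{eqn:limdefafP} to exist, forces $\a_f(P)=\d_f$, and transfers to $\a_f(P)$ whatever arithmetic properties $\d_f$ has. Since $\aupper_f$ and $\alower_f$ are independent of the chosen height, we are free to replace $h_X$ by any ample-comparable height we like.

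\textbf{The canonical-height mechanism.} The tool for the lower bound is a Tate limit attached to an eigenclass. Suppose $\d_f$ is a simple eigenvalue of $f^*$ on $\NS(X)_\RR$, strictly largest in absolute value, with eigenclass $\eta$ representable by a nef $\RR$-divisor. Fixing a Weil height $h_\eta$ for $\eta$, the limit
\[
  \hhat_{f,\eta}(Q)=\lim_{n\to\infty}\d_f^{-n}\,h_\eta\bigl(f^n(Q)\bigr)
\]
exists and satisfies $\hhat_{f,\eta}\circ f=\d_f\,\hhat_{f,\eta}$ and $\hhat_{f,\eta}=h_\eta+O(1)$; the spectral gap controls the telescoping error terms, and a nef $\eta$ keeps the iterates from becoming too negative. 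If $\hhat_{f,\eta}(P)>0$, then $h_\eta\bigl(f^n(P)\bigr)=\hhat_{f,\eta}(P)\,\d_f^n+o(\d_f^n)$, and since $\eta$ is dominated by an ample class this gives $\hplus_X\bigl(f^n(P)\bigr)\gg\d_f^n$, hence $\alower_f(P)\ge\d_f$. The positivity $\hhat_{f,\eta}(P)>0$ is precisely the point where Zariski density must be used: the vanishing locus $\{\hhat_{f,\eta}=0\}$ should be contained in a proper $f$-invariant closed subset (a non-positivity or base locus of $\eta$), which a dense orbit cannot meet; in favorable geometric settings this containment is provable.

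\textbf{Other classes, and the obstacle.} When the eigenstructure is unfavorable ($\d_f$ not dominant, not semisimple, or the eigenclass not nef), or when $f$ is a genuinely rational map so that $(f^n)^*\neq(f^*)^n$ and orbits may collide with the indeterminacy locus, the construction above degenerates and one falls back on special structure: monomial maps reduce to a linear recursion on the exponent lattice together with elementary Diophantine estimates; maps on abelian varieties and on products reduce to classical height machinery; surface automorphisms use Hodge-index positivity of the dominant eigenclass. When none of this is available one settles for the weaker statement of the abstract — a Zariski-dense family of points with pairwise-disjoint orbits and $\aupper_f(P)=\d_f$ — obtained by producing points of large $h_\eta$ on suitably chosen curves that avoid the bad loci (forcing $\hhat_{f,\eta}(P)>0$) and then varying the curve. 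I expect the decisive obstacle throughout to be exactly the lower bound of the first paragraph in the absence of a positive canonical height: at present nothing prevents $h_X\bigl(f^n(P)\bigr)$ from dipping far below $\d_f^n$ along a subsequence, and identifying how the geometry of a dense orbit rules this out is the real content of the conjecture.
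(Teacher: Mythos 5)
You are right that this statement is a conjecture: the paper offers no proof of it, and neither could you, so the only thing to evaluate is whether your strategy sketch matches what the paper actually does in the cases it settles. It does, quite closely. The paper's mechanism in every proved case (Theorems~\ref{theorem:afPNSeqRnew}, \ref{theorem:afPdf1regaffaut}, \ref{theorem:surfaceautafPeqdf}, and Proposition~\ref{proposition:hfge0impafeqdf}) is exactly your ``crux'': establish $\alower_f(P)\ge\d_f$ via positivity of a canonical height attached to an eigendivisor (or of the weak lower canonical height $\hhatlower_{f,D}^\circ$), then combine with the fundamental inequality $\aupper_f(P)\le\d_f$ to get existence of the limit and the equality $\a_f(P)=\d_f$ simultaneously. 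Two points of divergence are worth flagging. First, your opening reduction of parts (a), (c), (d) to the dense-orbit case by restricting to $Y=\overline{\Orbit_f(P)}$ is not how the paper organizes matters, and for (c) it is shakier than you suggest: even granting (b) for $f|_Y$, you would need $\d_{f|_Y}$ to be an algebraic integer, which is itself unknown for general dominant rational maps (the paper only gets algebraicity of $\d_f$ from algebraic stability or, for monomial maps, from \cite{MR2358970}). Second, for the weaker ``Zariski dense set of points with disjoint orbits'' result, the paper does not vary curves of large height; it finds a periodic point $Q_0$ on the hyperplane at infinity, builds a $\gp$-adic attracting neighborhood $U$ of $Q_0$ with $f(U)\subset U$, and shows the single place $\gp$ already forces $h\bigl(f^n(P)\bigr)\gg\d_f^n$ for all $P\in U\cap\AA^N(K)$ (Theorem~\ref{theorem:htwithperpt}), supplemented by Guedj's classification when no such periodic point exists. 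Your closing diagnosis of the obstacle --- nothing currently prevents $h_X\bigl(f^n(P)\bigr)$ from dipping below $\d_f^n$ along a subsequence --- is an accurate statement of why the conjecture remains open.
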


In~\cite{kawsilv:arithdegledyndeg}, we stated without proof a number
of cases for which we could prove
Conjecture~\ref{conjecture:afPeqdfY}, and we promised that the proofs
would appear in a subsequent publication. This paper, which is that
publication, contains proofs of the following results.

\begin{theorem}
\label{theorem:casesconjistrue}
Conjecture~$\ref{conjecture:afPeqdfY}$ is true in the following
situations\textup:
\begin{parts}
\Part{(a)}
$f$ is a morphism and $\NS(X)_\RR=\RR$.
\Part{(b)}
$f$ is the extension to~$\PP^N$ of a regular affine
automorphism~$\AA^N\to\AA^N$.
\Part{(c)}
$X$ is a smooth projective surface and $f$ is an automorphism.
\Part{(d)}
$f:\PP^N\dashrightarrow\PP^N$ is a monomial map and we consider
only points~$P\in\GG_m^N(\Qbar)$.
\end{parts}
\end{theorem}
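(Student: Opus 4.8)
\emph{Strategy.} The plan is to treat the four cases by a common scheme. In each case the inequality $\aupper_f(P)\le\d_f$ is the fundamental inequality~\eqref{eqn:fundineq}, so the real content of part~(b) of Conjecture~\ref{conjecture:afPeqdfY} is the reverse inequality, while parts~(a), (c), (d) will follow once one shows that $\a_f(P)$ exists and that, as $P$ ranges over $X(\Qbar)_f$, it takes only finitely many values, each an algebraic integer. A first reduction disposes of the case $\d_f=1$: then $h_X(f^n(P))$ grows at most polynomially in~$n$ for every~$P$ (in cases~(a) and~(c) because $f^{*}$ has spectral radius~$1$ on $\NS(X)_\RR$, so the integral operators $(f^n)^{*}$ have polynomially bounded norm; in case~(b) because $f$ is then affine linear), whence $\a_f(P)=1=\d_f$ and all parts hold trivially. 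So assume $\d_f>1$.

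\emph{Cases (a), (b), (c): canonical heights.} Here I would run a single argument. In case~(a), $f^{*}$ acts on $\NS(X)/(\text{torsion})\cong\ZZ$ as multiplication by the integer $d=\d_f\ge 2$, and the Call--Silverman construction gives a canonical height $\hhat=\lim_n d^{-n}\,h_X\circ f^n$ with $\hhat\circ f=d\,\hhat$, $\hhat\ge 0$, and $\hhat=h_X+O(1)$ (up to a harmless $O(\sqrt{h_X})$ error in general). In case~(b), a regular affine automorphism with $\d_f=\deg f\ge 2$, and in case~(c), a surface automorphism with $\d_f>1$, I would instead use the canonical heights $\hhat^{+},\hhat^{-}$ of Silverman, Kawaguchi, and Lee, respectively of Kawaguchi: $\hhat^{+}\circ f=\d_f\,\hhat^{+}$, $\hhat^{\pm}\ge 0$, $\hhat^{\pm}\le c\,h_X+O(1)$, together with a two-sided comparison $h_X\le c\,(\hhat^{+}+\hhat^{-})+O(1)$ valid away from a proper $f$-stable Zariski-closed set~$Z$: in case~(b), $Z$ is the hyperplane at infinity, off which $f$ is the given automorphism of $\AA^N$; in case~(c), $Z$ is the finite union of $f$-periodic curves on which the big nef class $\theta^{+}+\theta^{-}$ degenerates. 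Now suppose $\Orbit_f(P)$ is Zariski dense; then it is not contained in~$Z$, and since $Z$ is $f$-stable the whole orbit avoids~$Z$. If we had $\hhat^{+}(P)=0$ (take $\hhat^{+}=\hhat$, $\hhat^{-}=0$ in case~(a)), then $\hhat^{+}$ vanishes along the forward orbit while $\hhat^{-}(f^n(P))=\d_f^{-n}\hhat^{-}(P)$ stays bounded, so $h_X(f^n(P))=O(1)$; by Northcott over a number field of definition of~$P$ the orbit would be finite, contradicting Zariski density. Hence $\hhat^{+}(P)>0$, and $h_X(f^n(P))\ge c^{-1}\,(\d_f^n\,\hhat^{+}(P)-O(1))$ gives $\alower_f(P)\ge\d_f$, so $\a_f(P)=\d_f$, proving part~(b) of the conjecture. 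For the other parts one checks that $\a_f(P)\in\{1,\d_f\}$ for \emph{every}~$P\in X(\Qbar)_f$: off~$Z$ the dichotomy above applies, and on~$Z$ the orbit is eventually absorbed into a variety of dimension $<N$ (a periodic curve; the indeterminacy locus of~$f^{-1}$ at infinity) on which $f$ acts with lower complexity, contributing only finitely many further algebraic-integer values of $\a_f$; and $\d_f$ itself, being an eigenvalue of $f^{*}$ on the lattice $\NS(X)$ (or the integer $\deg f$), is an algebraic integer.

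\emph{Case (d): monomial maps.} Write $f\colon\bfx\mapsto\bfx^{A}$ with $A\in\ZZ^{N\times N}$, $\det A\ne 0$; then $\d_f=\rho(A)$. For $P=(\xi_1,\dots,\xi_N)\in\GG_m^N(\Qbar)$ and each place~$v$ of a number field of definition, set $\bfl_v(P)=(\log\|\xi_1\|_v,\dots,\log\|\xi_N\|_v)\in\RR^N$, so that $\bfl_v(f^n(P))=A^n\bfl_v(P)$. Applying the product formula coordinate by coordinate gives $h_{\PP^N}(f^n(P))\asymp\sum_v\|A^n\bfl_v(P)\|$ with purely multiplicative implied constants; hence $\a_f(P)$ exists and equals $\max_v\lim_n\|A^n\bfl_v(P)\|^{1/n}$ (or $1$ if all $\bfl_v(P)=\bfzero$), namely the largest modulus $|\mu|$ of an eigenvalue of~$A$ whose generalized eigenspace is met by some $\bfl_v(P)$. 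Such a $|\mu|=\sqrt{\mu\overline\mu}$ is an algebraic integer, and it lies in the finite set of moduli of eigenvalues of~$A$ together with~$1$; this yields parts (a), (c), (d). For~(b), suppose $\a_f(P)<\rho(A)$, so every $\bfl_v(P)$ lies in the $A$-invariant subspace $V\subsetneq\RR^N$ spanned by the generalized eigenspaces of eigenvalues of modulus $<\rho(A)$. The homomorphism $\xi\mapsto(\log\|\xi\|_v)_v$ on the finitely generated group $\langle\xi_1,\dots,\xi_N\rangle\subseteq\Qbar^{*}$ is injective modulo~$\mu_\infty$ with discrete image (by Kronecker's and Dirichlet's theorems), so every $\RR$-linear relation among the vectors $(\log\|\xi_i\|_v)_i$ is an $\RR$-combination of integral ones $\{\mathbf{m}:\prod_i\xi_i^{m_i}\in\mu_\infty\}$; it follows that the smallest rational subspace~$W$ containing all $\bfl_v(P)$ satisfies $W\subseteq V$, hence $W\subseteq U:=V\cap\QQ^N$, a proper $A$-invariant \emph{rational} subspace. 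Then the smallest rational $A$-invariant subspace containing the $\bfl_v(P)$ is $\QQ[A]\cdot W\subseteq U$; since (again by Kronecker) its dimension equals $\dim\overline{\langle\Orbit_f(P)\rangle}$, this closure has dimension $\le\dim U<N$, so the subgroup generated by $\Orbit_f(P)$—a fortiori $\Orbit_f(P)$—is not Zariski dense.

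\emph{Main obstacle.} I expect the difficulty to be twofold. In cases~(b) and~(c) it is assembling the full canonical-height package in the needed generality—especially the two-sided comparison with~$h_X$ and the identification of its failure locus~$Z$ as a proper $f$-stable set, together with the analysis of orbits at infinity required for parts~(c), (d). In case~(d) it is the linear-algebra-over-$\QQ$ argument sketched above, whose real point is the discreteness of the logarithmic embedding: this is what converts Zariski density of the orbit into the statement that $A$ detects its own spectral radius on the vectors $\bfl_v(P)$.
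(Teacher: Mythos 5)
Your overall strategy for (a), (c), (d) is essentially the one the paper uses: in (a) the Call--Silverman canonical height with $f^*D\equiv\d_f D$ and the dichotomy $\hhat(P)>0$ unless $P$ is preperiodic; in (c) Kawaguchi's pair $\hhatplus,\hhatminus$ with $\hhatplus\circ f=\d_f\hhatplus$, the finite union $E_f$ of periodic curves as the exceptional locus, and the observation that on $E_f$ the map restricts to curve automorphisms of dynamical degree $1$; in (d) the logarithmic vectors $\bfl_v(P)$ with $\bfl_v(f^nP)=A^n\bfl_v(P)$ and the discreteness of the $S$-unit logarithmic embedding to show that real linear relations come from integral multiplicative ones (this is exactly the content of the results of \cite{arxiv1111.5664} that the paper cites). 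These parts are fine, modulo small slips (e.g.\ your formula for $\a_{f_A}(P)$ should be $\max\{1,\max_v\lim_n\|A^n\bfl_v(P)\|^{1/n}\}$, since eigenvalues of modulus less than $1$ still give arithmetic degree $1$).

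The genuine gap is in case (b), for points $P\in(\PP^N\setminus\AA^N)(\Qbar)_f$. You assert that such orbits are ``absorbed into a variety of dimension $<N$ on which $f$ acts with lower complexity, contributing only finitely many further algebraic-integer values of $\a_f$,'' but this is precisely the nontrivial content, and ``lower complexity'' by itself gives neither existence of the limit defining $\a_f(P)$ nor finiteness or integrality of its values (Conjecture~\ref{conjecture:afPeqdfY}(a,c,d)). The paper's argument needs two inputs from Sibony: first, $f\bigl(\PP^N\setminus(\AA^N\cup I_f)\bigr)=I_{f^{-1}}$ and $g=f|_{I_{f^{-1}}}$ is a surjective morphism of $X=I_{f^{-1}}$; second, $X$ is the image of an equidimensional surjective morphism $\pi:\PP^\ell\to X$, which forces $\NS$ of the normalization $\tilde X$ to be $\ZZ$ (via injectivity of $\tilde\pi^*$). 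Only then can one apply case (a) to the lifted morphism $\tilde g:\tilde X\to\tilde X$ and conclude $\a_f(P)=\a_g(f(P))=\d_g$ for wandering $P$ at infinity. Without identifying $I_{f^{-1}}$ as the attractor at infinity and establishing that its Picard rank is $1$, your canonical-height dichotomy does not apply there, and the values of $\a_f$ at infinity are not controlled. A secondary, smaller point: in (a) and (b) your Northcott step requires the two-sided comparison of $h_X$ with $\hhatplus+\hhatminus$ to hold along the entire forward orbit, which is automatic in $\AA^N$ but is exactly what fails at infinity.
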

\begin{proof}
The proofs of~(a),~(b),~(c), and~(d) are given, respectively, in
Theorems~\ref{theorem:afPNSeqRnew},~\ref{theorem:afPdf1regaffaut},~\ref{theorem:surfaceautafPeqdf}, and~\ref{theorem:afPmonomial}.
\end{proof}

The following weaker result, which provides some additional evidence
for Conjecture~\ref{conjecture:afPeqdfY}, was also stated without
proof in~\cite{kawsilv:arithdegledyndeg}. The proof is given in this
paper.

\begin{theorem}
\label{theorem:mainthma}
Let $f:\AA^2\to\AA^2$ be an affine morphism defined over~$\Qbar$ whose
extension to $f:\PP^2\dashrightarrow\PP^2$ is dominant.
Assume that either of the following is true\textup{:}
\begin{parts}
\Part{(a)}
The map~$f$ is algebraically stable, i.e., for
all~$n\ge1$ we have  $(f^*)^n=(f^n)^*$ on~$\NS(\PP^2)_\RR$.
\Part{(b)}
$\deg(f)=2$.
\end{parts}
Then
\[
  \bigl\{P\in \AA^2(\Qbar) : \a_f(P)=\d_f\bigr\}
\]
contains a Zariski dense set of points having disjoint orbits.
\end{theorem}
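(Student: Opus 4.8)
The plan is the following. Because the fundamental inequality~\eqref{eqn:fundineq} already supplies $\aupper_f(P)\le\d_f$ for every point, it is enough to exhibit many points~$P$ with $\alower_f(P)\ge\d_f$; for such a~$P$ the two inequalities force the limit~\eqref{eqn:limdefafP} to exist and to equal~$\d_f$. I would obtain this lower bound by arranging that the forward orbit of~$P$ escapes to infinity, at a single well-chosen finite place~$\gp$ of a number field~$K$ over which $f$ and $P$ are defined, at the maximal possible rate: if one can force $\|f^n(P)\|_\gp=q^{\,e_n}$ with a fixed $q>1$ and with $e_n^{1/n}\to\d_f$, then $h_X\bigl(f^n(P)\bigr)\ge\frac{[K_\gp:\QQ_\gp]}{[K:\QQ]}\,e_n\log q$, whence $\alower_f(P)\ge\d_f$.

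The first task is to understand the geometry at infinity and to pin down $\d_f$. Write $f=(f_1,f_2)$ with $d=\deg f=\max(\deg f_1,\deg f_2)$ and homogenize to $f=[F_1:F_2:Z^d]$ on $\PP^2$; since the top-degree parts $\tilde f_1,\tilde f_2$ cannot both vanish, this representative has no common factor, so all indeterminacy and contraction of $f$ occurs along the line at infinity $L_\infty=\{Z=0\}$, and $f$ restricted to $L_\infty$ is (where defined) a rational self-map $\phi:\PP^1\dashrightarrow\PP^1$ recording the direction at infinity of a point near $L_\infty$. In case~(a), algebraic stability gives $\d_f=d$ outright. In case~(b) a short case analysis on $(\deg f_1,\deg f_2)$ and on the base locus of the net of conics $\langle F_1,F_2,Z^2\rangle$ shows that either $f$ is already algebraically stable on $\PP^2$, so $\d_f=2$ and we are reduced to case~(a); or $L_\infty$ is contracted to a point lying in the indeterminacy locus, in which case a small number of blow-ups of $\PP^2$ produces a surface $\widetilde X$ on which $f$ lifts to an algebraically stable map with $\d_f=\rho\bigl(f^*,\NS(\widetilde X)_\RR\bigr)$; or $\d_f=1$, and then $\a_f(P)=1=\d_f$ for \emph{all} $P$ and the theorem is elementary. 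So I would assume $\d_f>1$ from here on, working on $\PP^2$ in the algebraically stable cases and on $\widetilde X$ otherwise.

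For the local construction, choose $\gp$ of large residue characteristic so that $f$, $\phi$, $L_\infty$, the indeterminacy locus, the exceptional divisors, and the locus where the local multiplicity of $f$ degenerates all have good reduction. The key sub-step is to produce a direction $\alpha\in\PP^1(\Qbar)$ that is $\phi$-preperiodic and whose entire forward $\phi$-orbit avoids this ``bad locus'': if $\deg\phi\ge2$ there are infinitely many $\phi$-periodic points and one avoiding a fixed finite set is immediate, while if $\deg\phi\le1$ one uses instead the (non-indeterminate) fixed point to which $L_\infty$ collapses --- this is where the algebraic-stability analysis of the previous step is used. Then for any $P\in\AA^2(K)$ with coordinates $\gp$-integral away from $\gp$, with $\|P\|_\gp=q>1$, and with reduction-of-direction at $\gp$ equal to $\alpha$, an induction on $n$ using good reduction shows that the reduction of the direction of $f^n(P)$ stays on the $\phi$-orbit of $\alpha$ --- hence forever outside the bad locus --- and that $\|f^n(P)\|_\gp=q^{\,e_n}$ with $e_n^{1/n}\to\d_f$ (in the blow-up case this is the statement that the orbit's local height grows at the rate of the simple top eigenvalue $\d_f$ of $f^*$ on $\NS(\widetilde X)_\RR$). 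Consequently $\alower_f(P)\ge\d_f$, so $\a_f(P)=\d_f$.

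Finally, Zariski density and disjointness. The conditions imposed on $P$ --- $\gp$-integrality away from $\gp$, prescribed $\gp$-adic size, and a congruence on the direction at $\gp$ --- cut out a Zariski dense set $S\subset\AA^2$; indeed $S$ is a positive-dimensional congruence family (one may vary $P$ modulo higher powers of $\gp$ and at all other places), so it remains dense along a Zariski dense family of curves. To thin $S$ to a dense subset with pairwise disjoint orbits, observe that any collision $f^m(P)=f^{m'}(P')$ between two points of $S$ forces $\|f^m(P)\|_\gp=\|f^{m'}(P')\|_\gp$, i.e.\ $e_m=e_{m'}$; since $\d_f>1$ makes the sequence $(e_n)$ strictly increasing this gives $m=m'$, and then $P'$ lies in the finite set $(f^m)^{-1}\bigl(f^m(P)\bigr)$. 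Hence on a curve $C$ of our family that is not itself one of these finitely many fibers, only finitely many collision constraints coming from finitely many previously chosen points are active, so one may choose points one curve at a time and produce the desired Zariski dense set with disjoint orbits. The step I expect to be the main obstacle is precisely the subcase of~(b) in which $f$ is not algebraically stable: correctly identifying $\d_f$ via a blow-up on which $f$ becomes algebraically stable, and then verifying that the $\gp$-adic orbit of the constructed points escapes at the spectral-radius rate $\rho\bigl(f^*,\NS(\widetilde X)_\RR\bigr)$ rather than at a slower rate caused by a stray intersection of the orbit with the exceptional divisors; the reduction and archimedean bookkeeping elsewhere is routine by comparison.
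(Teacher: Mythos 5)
Your overall strategy --- force the orbit to escape to infinity at a single finite place $\gp$ at rate $\d_f$, so that $\alower_f(P)\ge\d_f$, and combine with the fundamental inequality --- is exactly the paper's (it is packaged there as positivity of a ``weak lower canonical height'' $\hhatlower_f^\circ$). But your key sub-step for case (a) has a genuine gap. You propose to find a $\phi$-preperiodic direction $\alpha$ at infinity whose entire forward $\phi$-orbit avoids the bad locus, treating $\deg\phi\le1$ by ``the (non-indeterminate) fixed point to which $L_\infty$ collapses.'' When $\phi$ is a M\"obius transformation, $L_\infty$ does not collapse, and its only periodic points are the one or two fixed points of $\phi$; these can all lie in the indeterminacy locus. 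The paper's explicit example $f(x,y)=(ax^2y,\,xy^2)$ with $a$ not a root of unity is algebraically stable, yet has \emph{no} periodic point on the line at infinity, so no preperiodic direction can have a forward orbit avoiding the bad locus. The paper must handle this situation by a separate direct argument: it works in the $\gp$-adic domain $U=\{|x|_\gp>|y|_\gp>|z|_\gp,\ |y|_\gp^d>|x|_\gp^{d-1}|z|_\gp\}$, shows $f(U)\subset U$ by an ultrametric estimate on each monomial of the homogenized map, and deduces $|\b_n|_\gp\ge|\b_0|_\gp^{d^n}$. Your argument as written does not cover these maps.

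The second gap is the one you flag yourself: the non-algebraically-stable maps of degree $2$ with $1<\d_f<2$. Your plan (blow up to an algebraically stable model $\widetilde X$ and show the local height grows at the spectral radius of $f^*$ on $\NS(\widetilde X)_\RR$) is not carried out, and the step you identify as hardest --- that the $\gp$-adic orbit escapes at rate $\rho(f^*,\NS(\widetilde X)_\RR)$ rather than slower --- is precisely what is missing. The paper avoids this entirely by invoking Guedj's classification of dominant quadratic maps of $\AA^2$: after discarding the algebraically stable cases (covered by part (a)) and the $\d_f=1$ cases (trivial), only three explicit families remain, and for each one either a direct $\gp$-adic computation shows Fibonacci-type growth $|y_n|_\gp=R^{F_{n+2}}$ (giving $\liminf h(f^n(P))/\d_f^n>0$ since $\d_f=\frac{1+\sqrt5}{2}$), or the second iterate extends to a morphism of $\PP^2$ with a fixed point at infinity, to which the periodic-point theorem applies. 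Your Zariski-density and disjoint-orbits bookkeeping at the end is fine (and more explicit than the paper's), but the two gaps above are exactly where the substance of the proof lies.
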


The proof of Theorem~\ref{theorem:mainthma} uses $p$-adic methods,
weak lower canonical heights, and Guedj's classification of degree~$2$
planar maps~\cite{MR2097402}. The tools that we develop, specifically
Proposition~\ref{proposition:hfge0impafeqdf} and
Theorem~\ref{theorem:htwithperpt}, can be used to prove
Theorem~\ref{theorem:mainthma} more generally for affine morphisms
having a periodic point in the hyperplane at infinity.

\begin{remark}
We also note that Jonsson and Wulcan~\cite{arxiv1202.0203} have proven
a result on dynamical canonical heights that implies parts of
Conjecture~\ref{conjecture:afPeqdfY} for polynomial
morphisms~$\f:\AA^2\to\AA^2$ of small topological degree. These are
maps satisfying $\#f^{-1}(Q)<\d_f$ for a general
point~$Q\in\AA^2(\Qbar)$. Their proof uses a recent dynamical
compactification of~$\PP^2(\CC)$ due to Favre and
Jonsson~\cite{arxiv0711.2770}.
\end{remark}

\begin{remark}
We also mention the following related result
from~\cite{kawsilv:jordanblock}.  If $f:X\to X$ is a morphism and
$\Pic^0(X)_\RR=0$, then for all $P\in X(\Qbar)$, the limit defining
$\a_f(P)$ exists, and further the set $\bigl\{\a_f(Q):Q\in
X(\Qbar)\bigr\}$ is a finite set of algebraic integers. In other
words, under these hypotheses, Conjecture~1(a,c,d) is true; but we are
not able to prove Conjecture~1(b).
\end{remark}

\begin{acknowledgement}
The authors would like to thank ICERM for providing a stimulating
research environment during their spring 2012 visits.  The authors
would also like to thank the organizers of conferences on
Automorphisms (Shirahama 2011), Algebraic Dynamics (Berkeley 2012),
and the SzpiroFest (CUNY 2012), during which some of this research was
done.
\end{acknowledgement}

\section{Proof of Theorem~$\ref{theorem:casesconjistrue}$}
\label{section:prfofthma}

In this section we prove the various parts of
Theorem~\ref{theorem:casesconjistrue}, which give cases in which
Conjecture~$\ref{conjecture:afPeqdfY}$ is true. We remark that the
maps in Theorem~\ref{theorem:casesconjistrue}(a,b,c) are algebraically
stable, i.e., $(f^n)^*=(f^*)^n$.  This is automatic for morphisms, and
it is also a standard fact that it is true for regular affine
automorphisms.
Further, if~$f$ is algebraically stable, then
\[
  \d_f
   = \lim_{n\to\infty} \rho\bigl((f^n)^*\bigr)^{1/n}
   = \lim_{n\to\infty} \rho\bigl((f^*)^n\bigr)^{1/n}
   = \rho(f^*),
\]
so~$\d_f$ is automatically an algebraic integer.  Monomial maps are
not, in general, algebraically stable, but their dynamical degrees are
known to be algebraic integers~\cite{MR2358970}.  Thus in the proof of
Theorem~\ref{theorem:casesconjistrue}, if we prove that
$\a_f(P)=\d_f$, then we also know that $\a_f(P)$ is an algebraic
integer.

\subsection{Proof of Theorem~\ref{theorem:casesconjistrue}(a)}

We start with a result that is somewhat more general than
Theorem~\ref{theorem:casesconjistrue}(a).

\begin{theorem}
\label{theorem:afPNSeqRnew}
Let $X/\Qbar$ be a \textup(normal\textup) projective variety,
let~$f:X\to X$ be a morphism defined over~$\Qbar$, and suppose that
there is an ample divisor class~$D\in\NS(X)_\RR$ satisfying
\[
  f^*D \equiv \d_f D.
\]
Let~$P\in X(\Qbar)$. Then
\[
  \a_f(P) = 
  \begin{cases}
    1&\text{if $P$ is preperiodic,}\\
    \d_f&\text{if $P$ is wandering.}\\
  \end{cases}
\]
\end{theorem}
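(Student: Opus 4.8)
The plan is to reduce everything to the theory of canonical heights for the polarized dynamical system $(X,f,D)$. First dispose of the degenerate case: one always has $\d_f\ge1$, and if $\d_f=1$ then the fundamental inequality~\eqref{eqn:fundineq} together with $\hplus_X\ge1$ forces $1\le\alower_f(P)\le\aupper_f(P)\le1$, so $\a_f(P)$ exists and equals $1=\d_f$ for every $P$, and the asserted dichotomy holds trivially. So from here on I would assume $\d_f>1$.

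The main construction is a Call--Silverman-style canonical height. Functoriality of Weil heights applied to $f^*D\equiv\d_f D$ gives
\[
  h_D\bigl(f(Q)\bigr)=\d_f\,h_D(Q)+O\bigl(\sqrt{\hplus_D(Q)}+1\bigr)
  \qquad\text{for all }Q\in X(\Qbar),
\]
where the square-root error accounts for $\equiv$ being numerical rather than linear equivalence in $\NS(X)_\RR$ (if $\Pic^0(X)=0$ the error is just $O(1)$, but this improvement is not needed). Since $\d_f>1$, the usual telescoping argument shows that $\hhat_D(Q):=\lim_{n\to\infty}\d_f^{-n}h_D(f^n(Q))$ converges and defines a function with $\hhat_D=h_D+O(\sqrt{\hplus_D}+1)$, with $\hhat_D\circ f=\d_f\hhat_D$, and with $\hhat_D\ge0$ (the last because $h_D$ is bounded below, $D$ being ample).

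Next I would establish the key dichotomy: $\hhat_D(P)=0$ if and only if $P$ is preperiodic. If $P$ is preperiodic then $\Orbit_f(P)$ is finite, so $h_D$ is bounded on it and $\hhat_D(P)=\d_f^{-n}\hhat_D(f^n(P))\to0$. Conversely, if $\hhat_D(P)=0$ then $\hhat_D(f^n(P))=0$ for all $n$, whence $h_D(f^n(P))$ is bounded; since $X$, $f$ and $D$ are all defined over a number field over which $P$ is also defined, the entire orbit lies in $X(K)$ for one fixed number field $K$, so Northcott's theorem forces $\Orbit_f(P)$ to be finite, i.e.\ $P$ preperiodic. (Note that for a self-morphism every point is either preperiodic or wandering, so this dichotomy is exhaustive.)

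Finally I would read off the conclusion. If $P$ is preperiodic, $h_X$ is bounded on the finite orbit, so $\hplus_X(f^n(P))^{1/n}\to1$ and $\a_f(P)=1$. If $P$ is wandering, then $\hhat_D(P)>0$, and a routine estimate using $h_D=\hhat_D+O(\sqrt{\hplus_D}+1)$ together with $\hhat_D(f^n(P))=\d_f^{\,n}\hhat_D(P)$ gives $h_D(f^n(P))=\d_f^{\,n}\hhat_D(P)\bigl(1+O(\d_f^{-n/2})\bigr)$; in particular this tends to infinity, so $\hplus_D(f^n(P))=h_D(f^n(P))$ for large $n$ and $h_D(f^n(P))^{1/n}\to\d_f$. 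Since $\aupper_f$ and $\alower_f$ are independent of the choice of ample height~\cite{kawsilv:arithdegledyndeg}, I may use $h_D$ in place of $h_X$, so $\a_f(P)=\d_f$. The only point that genuinely requires care is the construction and positivity of $\hhat_D$ when $f^*D$ is merely numerically (not linearly) proportional to $D$, i.e.\ making the Tate limit converge through the $O(\sqrt{\hplus_D})$ error; the remaining steps are routine, modulo the bookkeeping of keeping the whole orbit inside a single number field so that Northcott applies.
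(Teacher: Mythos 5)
Your proof is correct and follows essentially the same route as the paper: the paper simply cites \cite[Theorem~5]{kawsilv:arithdegledyndeg} for the canonical height $\hhat_{D,f}$ attached to the numerical eigendivisor, its nonnegativity, and the dichotomy ``$\hhat_{D,f}(P)=0$ iff $P$ preperiodic,'' whereas you reconstruct exactly that machinery (Tate limit through the $O(\sqrt{\hplus_D})$ error, Northcott) before combining the resulting lower bound $\alower_f(P)\ge\d_f$ with the fundamental inequality~\eqref{eqn:fundineq}. No gaps.
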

\begin{proof}
If~$P$ is preperiodic, then directly from the definition we see
that~$\a_f(P)=1$.  Next, if $\d_f=1$, then the fundamental
inequality~$\aupper_f(P)\le\d_f$ from~\eqref{eqn:fundineq} gives
\[
  1 \le \alower_f(P) \le \aupper_f(P) \le \d_f = 1.
\]
Hence~$\a_f(P)$ exists and is equal to both~$\d_f$ and~$1$.
\par
We assume now that $P$ is not preperiodic and that~$\d_f>1$.
The fact that~$\d_f>1$ and $f^*D\equiv\d_fD$ means that we are in the
situation to apply the canonical height~$\hhat_{D,f}$ described
in~\cite[Theorem~5]{kawsilv:arithdegledyndeg}.
Since we have assumed that the divisor~$D$ is ample and that~$P$ is
not preperiodic, we see
from~\cite[Theorem~5(d)]{kawsilv:arithdegledyndeg}
that~$\hhat_{D,f}(P)\ne0$. 
Then~\cite[Theorem~5(c)]{kawsilv:arithdegledyndeg}
tells us that $\alower_f(P)\ge\d_f$. But~\eqref{eqn:fundineq} 
says that~$\aupper_f(P)\le\d_f$, which shows that the limit
defining~$\a_f(P)$ exists and satisfies $\a_f(P)=\d_f$.
\end{proof}

We next use Theorem~\ref{theorem:afPNSeqRnew} to prove
Theorem~\ref{theorem:casesconjistrue}(a).

\begin{proof}[Proof of Theorem~\textup{\ref{theorem:casesconjistrue}(a)}]
Let~$D$ be an ample divisor on~$X$. The assumption
that~$\NS(X)_\RR=\RR$ implies that $f^*D\equiv dD+T$ for
some~$d\in\RR$ and~$T\in\NS(X)_\tors$.  Replacing~$D$ by a multiple,
we may assume that~$T=0$, so~$f^*D\equiv dD$. Since~$f$ is a morphism,
we have $(f^n)^*D\equiv(f^*)^nD\equiv d^nD$, so~$d=\d_f$.  We are thus
in exactly the situation to apply Theorem~\ref{theorem:afPNSeqRnew}.
We conclude that~$\a_f(P)=1$ or~$\d_f$, respectively, depending on
whether~$P$ is or is not preperiodic.
\end{proof}

\begin{remark}
\label{remark:afPPiceqNS}
We mention that for a variety such as~$\PP^N$, which has
$\Pic(X)_\RR=\NS(X)_\RR=\RR$, Theorem~\ref{theorem:afPNSeqRnew} is an
immediate consequence of the classical theory of canonical heights for
polarized dynamical systems; see for
example~\cite{callsilv:htonvariety,MR2316407}. Thus~$f^*D$ is linearly
equivalent to~$dD$ with $d=\d_f$, and the associated canonical
height~$\hhat_{D,f}$ satisfies
\[
  h\bigl(f^n(P)\bigr) = \hhat_{D,f}\bigl(f^n(P)\bigr)+O(1)
  = d^n\hhat_{D,f}(P)+O(1).
\]
Hence
\[
  \a_f(P)
  = \lim_{n\to\infty} \hplus\bigl(f^n(P)\bigr)^{1/n}
  = \begin{cases}
     d&\text{if $\hhat_{D,f}(P)>0$,}\\
     1&\text{if $\hhat_{D,f}(P)=0$.}\\
  \end{cases}
\]
This completes the proof, since~$\hhat_{D,f}(P)>0$ if~$P$ is wandering
and $\hhat_{D,f}(P)=0$ if~$P$ is preperiodic.
\end{remark}

\subsection{Proof of Theorem~\ref{theorem:casesconjistrue}(b)}

The next result on regular affine automorphisms implies
Theorem~\ref{theorem:casesconjistrue}(b). 

\begin{definition}
Let~$f:X\dashrightarrow X$ be a rational map.  The \emph{indeterminacy
  locus of~$f$}, which we denote~$I_f$, is the subvariety of~$X$ on
which~$f$ is not well-defined.
\end{definition}

\begin{definition}
Let~$f:\AA^N\to\AA^N$ by an automorphism.  By abuse of notation, we
write~$f$ and~$f^{-1}$ also for the extensions of~$f$ and~$f^{-1}$ to
rational maps~$\PP^N\dashrightarrow\PP^N$, and we write~$I_f$
and~$I_{f^{-1}}$ for their indeterminacy loci in~$\PP^N$.  The map~$f$
is a \emph{regular affine automorphism} if $I_f\cap
I_{f^{-1}}=\emptyset$.
\end{definition}

\begin{theorem}
\label{theorem:afPdf1regaffaut}
Let $f:\AA^N\to\AA^N$ be a regular affine automorphism of degree
$d\ge2$ defined over~$\Qbar$, and let~$g$ denotes the restriction
of~$f$ to $\PP^N\setminus\AA^N$.  Then
\[
  \a_f(P) = \begin{cases}
     1&\text{if $P$ is periodic,}\\
     \d_f&\text{if $P\in\AA^N(\Qbar)$ is wandering,}\\
     \d_g&\text{if $P\in(\PP^N\setminus\AA^N)(\Qbar)_f$ is wandering.}\\
  \end{cases}
\]
\end{theorem}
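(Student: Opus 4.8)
The plan is to handle the three cases by exhibiting, in each case, a canonical height whose positivity governs the growth rate of $h_{\PP^N}(f^n(P))$. For a regular affine automorphism of degree $d\ge2$, the standard theory (Silverman, and Marcello, and the authors' earlier work) gives divisor classes $\eta^+,\eta^-\in\NS(\PP^N)_\RR$ with $f^*\eta^+\equiv d\,\eta^+$ and $(f^{-1})^*\eta^-\equiv d\,\eta^-$, hence canonical heights $\hhat^+_f$ and $\hhat^-_f$ associated to these eigenclasses, together with $\hhat_f=\hhat^+_f+\hhat^-_f$, which is a nonnegative Weil height for an ample class and therefore vanishes exactly on the (finite) set of periodic points. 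First I would record that $\d_f=d$ and that the restriction $g$ of $f$ to the hyperplane at infinity $H=\PP^N\setminus\AA^N$ has $\d_g=d-1$ when $N=2$ and, more generally, that $\d_g$ is the relevant secondary degree computed from the growth of $(f^n)^*$ on the boundary; in all cases $\d_g<\d_f$, and on the boundary $H$ the map $g$ is again (essentially) a polarized-type dynamical system to which the canonical-height machinery of Theorem~\ref{theorem:afPNSeqRnew} applies.

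The three cases then go as follows. If $P$ is periodic, $\a_f(P)=1$ directly from the definition, as in the proof of Theorem~\ref{theorem:afPNSeqRnew}. If $P\in\AA^N(\Qbar)$ is wandering, I would show $\hhat^+_f(P)>0$: if instead $\hhat^+_f(P)=0$, then since the forward orbit of an affine point stays away from $I_f$ one uses the functional equation to deduce that the orbit has bounded height, forcing $P$ to be preperiodic, hence periodic (because $f$ is an automorphism), a contradiction. Granting $\hhat^+_f(P)>0$, the identity $h_{\PP^N}(f^n(P))=\hhat^+_f(f^n(P))+\hhat^-_f(f^n(P))+O(1)=d^n\hhat^+_f(P)+d^{-n}\hhat^-_f(P)+O(1)$ yields $\a_f(P)=d=\d_f$. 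Finally, if $P\in H(\Qbar)_f$ is wandering, then $P$ is a wandering point for the boundary dynamical system $g:H\dashrightarrow H$ (or for a suitable blow-up model on which $g$ becomes a morphism), and $h_{\PP^N}$ restricted to $H$ is comparable to an ample height there; applying Theorem~\ref{theorem:afPNSeqRnew} (or its natural extension to the boundary) gives $\a_f(P)=\d_g$.

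The main obstacle I anticipate is the third case: making precise what "$\d_g$" means and why the canonical-height argument applies on the boundary. The hyperplane $H\cong\PP^{N-1}$ need not be $f$-invariant as a morphism — $I_f\cap H$ may be nonempty — so one must pass to a birational model $\widetilde H$ on which the induced self-map $\tilde g$ is a morphism with an eigen-polarization of eigenvalue $\d_g$, check that $\d_g<\d_f$, verify that a wandering affine-boundary point remains wandering and of nonzero canonical $\tilde g$-height on $\widetilde H$, and confirm that heights computed on $\widetilde H$ match $h_{\PP^N}|_H$ up to $O(1)$. For $N=2$ this is concrete (Favre's analysis of plane automorphisms, with $\d_g=d-1$), but for general $N$ one needs the structural description of regular affine automorphisms at infinity. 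A secondary technical point is ruling out the "mixed" possibility $\alower_f(P)<\aupper_f(P)$ for boundary points; this is handled exactly as in Theorem~\ref{theorem:afPNSeqRnew}, by sandwiching between the fundamental inequality $\aupper_f(P)\le\d_f$ — which must be refined to $\aupper_f(P)\le\d_g$ for orbits confined to $H$ — and the lower bound coming from the nonvanishing of the boundary canonical height.
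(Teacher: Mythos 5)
Your first two cases are essentially the paper's argument. The periodic case is immediate, and for a wandering $P\in\AA^N(\Qbar)$ the paper likewise invokes Kawaguchi's canonical height $\hhatplus(P)=\lim d^{-n}h(f^n(P))$, which is positive exactly for non-periodic affine points, and converts that positivity into $\alower_f(P)\ge d=\d_f$; combined with the fundamental inequality this gives $\a_f(P)=\d_f$. (Two small inaccuracies there: $\NS(\PP^N)_\RR=\RR$, so there are no two independent eigenclasses $\eta^\pm$ — both $\hhatplus$ and $\hhatminus$ are built from the hyperplane class — and the eigenvalue for $\hhatminus$ is $\deg(f^{-1})$, which need not equal $d$ for $N\ge3$. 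Neither affects the conclusion, since only the positivity of $\hhatplus(P)$ is used.)

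The genuine gap is in the boundary case, and it is exactly the obstacle you flagged but did not resolve. Your plan — pass to a birational model $\widetilde H$ of the hyperplane $H=\PP^N\setminus\AA^N$ on which the induced map becomes a morphism with an eigen-polarization — cannot work, because the induced self-map of $H$ is not dominant: by Sibony's structure theorem, $f$ maps $H\setminus I_f$ \emph{onto the proper subvariety} $I_{f^{-1}}\subset H$. A map that contracts $H$ onto a lower-dimensional set has no birational model as a dominant self-map of $H$, so there is no eigen-polarization on any $\widetilde H$ to speak of, and for $N=2$ your guess $\d_g=d-1$ is already wrong ($I_{f^{-1}}$ is a single fixed point there). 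The paper's resolution is to restrict to $X=I_{f^{-1}}$ itself: Sibony shows that $f|_X:X\to X$ is a surjective morphism and that every well-defined boundary orbit lands in $X$ after one step, so $\a_f(P)=\a_f(f(P))$ reduces to the dynamics of $f|_X$. One then needs $\NS$ of (the normalization of) $X$ to be $\ZZ$ so that Theorem~\ref{theorem:afPNSeqRnew} applies; this follows from a second result of Sibony, the existence of an equidimensional surjective morphism $\PP^\ell\to X$, which forces $\tilde\pi^*:\NS(\tilde X)\to\NS(\PP^\ell)=\ZZ$ to be injective. Without these two structural inputs your third case does not close, and in particular your proposed refinement $\aupper_f(P)\le\d_g$ for boundary orbits is obtained in the paper only after this reduction to $X$, not on $H$ itself.
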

\begin{proof}
If~$P$ is periodic, it is clear from the definition that~$\a_f(P)=1$.
We assume henceforth that~$P$ is wandering.
\par
If~$P\in\AA^N(\Qbar)$, the proof is similar to the proof sketched in
Remark~\ref{remark:afPPiceqNS}, using the theory of canonical heights
for regular affine automorphisms developed by the first author. It is
proven in~\cite{arxiv0909.3573} that for all $Q\in\AA^N(\Qbar)$, the
limit
\[
  \hhatplus(P) = \lim_{n\to\infty}\frac{1}{d^n}h\bigl(f^n(P)\bigr)
\]
exists and satisfies
\[
  \hhatplus(Q)=0 
  \quad\Longleftrightarrow\quad
  \text{$Q$ is periodic.}
\]
Since~$P$ is assumed wandering, we have~$\hhatplus(P)>0$.  Choose
an~$n_0$ so that $h\bigl(f^n(P)\bigr)\ge(d^n/2)\hhatplus(P)$ for all
$n\ge n_0$. Then
\[
  \alower_f(P)
   = \liminf_{n\to\infty} \hplus\bigl(f^n(P)\bigr)^{1/n}
  \ge  \liminf_{n\to\infty} 
         \left(\frac{d^n}{2}\hhatplus(P)\right)^{1/n}
  = d.
\]
Hence~$\alower_f(P)\ge\d_f$, and combined with~\eqref{eqn:fundineq},
we deduce as usual that $\a_f(P)$ exists and equals~$\d_f$.
\par 
It remains to deal with wandering points
in~$\PP^N(\Qbar)_f\setminus\AA^N(\Qbar)$, i.e., points~$P$ lying on
the hyperplane at infinity.  To ease notation, we let
\[
  X = I_{f^{-1}} \subset \PP^N\setminus\AA^N
\]
be the indeterminacy locus of~$f^{-1}$. A theorem of
Sibony~\cite[Proposition~2.5.3]{sibony:panoramas} says that the
restriction of~$f$ to~$X$ is a surjective morphism
\[
  g = f|_X : X \to X.
\]
We claim that for~$P\in X(\Qbar)$, the map~$g$ satisfies
\begin{equation}
  \label{eqn:agPdg1wandper}
  \a_g(P) = \begin{cases}
     \d_g&\text{if $P$ is wandering,}\\
     1&\text{if $P$ is preperiodic.}\\
  \end{cases}
\end{equation}
We will verify this claim by using Theorem~\ref{theorem:casesconjistrue}(a).
\par
Let~$\ell=\dim X$. By~\cite[Proposition~2.5.4]{sibony:panoramas} there
exists an equidimensional surjective morphism $\pi:\PP^\ell\to X$.
Let~$\tilde X$ be the normalization of~$X$. Since~$\PP^\ell$ is
normal, the map~$\pi$ lifts to a surjective morphism
$\tilde\pi:\PP^\ell\to\tilde X$. We are going to show that
\[
  \tilde\pi^* : \NS(\tilde X)\longrightarrow\NS(\PP^\ell)
\]
is injective. 
\par
Let $D\in\NS(\tilde X)$ with $\tilde\pi^*(D)=0\in\NS(\PP^\ell)$. 
Let~$C$ be a curve on~$\tilde X$ and choose a curve~$\tilde
C\subset\PP^\ell$ with $\tilde\pi_*(\tilde C)=C$. Then
\[
  0 = \tilde\pi^*(D)\cdot\tilde C = D\cdot\tilde\pi_*(\tilde C)
  = D\cdot C.
\]
This is true for every curve~$C$ on~$\tilde X$, and
hence~$D=0\in\NS(\tilde X)$, which proves the injectivity
of~$\tilde\pi^*$. Since~$\NS(\PP^\ell)=\ZZ$, we conclude
that~$\NS(\tilde X)=\ZZ$.
\par
Let $\tilde g:\tilde X\to\tilde X$ denote the morphism induced
by~$g:X\to X$, and let $p:\tilde X\to X$ denote the normalization
morphism. Chooose any ample divisor~$D$ on~$X$.  The fact
that~$\NS(\tilde X)=\ZZ$ implies that~$p^*D$ is ample on~$\tilde X$.
Let~$P\in X(\Qbar)$ and choose a point~$Q\in p^{-1}(P)\subset\tilde
X(\Qbar)$. Then
\[
  h_{p^*D}\bigl(\tilde g^n(Q)\bigr)
  = h_D\bigl(p\circ\tilde g^n(Q)\bigr)+O(1)
  = h_D\bigl(g^n(P)\bigr)+O(1),
\]
wehre the~$O(1)$ is independent of~$n$,~$P$, and~$Q$. It follows directly
from the definition that $\a_{\tilde g}(Q)$ exists if and only if $\a_g(P)$
exists, and if they exist, then they are equal. It is also clear from
the geometry that~$\d_g=\d_{\tilde g}$; and since~$\tilde g$ is a morphism,
its dynamical degree is equal to~$\rho(\tilde g^*)$, which is
an algebraic integer.
\par
Since~$p^*D$ is ample, the inverse image~$p^{-1}(P)$ is a finite set,
so~$P$ is preperiodic if and only if~$Q$ is preperiodic.  The
assertion~\eqref{eqn:agPdg1wandper} now follows from
Theorem~\ref{theorem:casesconjistrue}(a).
\par
We now resume the proof of Theorem~\ref{theorem:afPdf1regaffaut},
where we recall that we are reduced to the case that
$P\in\PP_f^N\setminus\AA^N$. From~\cite[Proposition~2.5.3]{sibony:panoramas}
we have
\begin{equation}
  \label{eqn:fPNANIfsubIf1}
  f\bigl(\PP^N\setminus(\AA^N\cup I_f)\bigr) = I_{f^{-1}},
\end{equation}
so the fact that~$I_{f^{-1}}\cap I_f=\emptyset$ (which is the definition
of regularity) implies that if~$Q\notin I_f$, then the entire forward 
orbit of~$Q$ is well-defined, i.e., 
\[
  \PP^N(\Qbar)_f = \PP^N(\Qbar)\setminus I_f.
\]
In any case, from~\eqref{eqn:fPNANIfsubIf1}
we see that our wandering point~$P$ satisfies $f(P)\in I_{f^{-1}}=X$, 
so the assertion~\eqref{eqn:agPdg1wandper} 
gives
\[
  \a_g\bigl(f(P)\bigr) = \d_g.
\]
Since~$g=f|_X$ and since we can compute the arithmetic degree using
the height associated to any ample divisor, we can compute~$\a_g$ using
a very ample height on~$X$ that is the restriction of a very ample height
on~$\PP^N$. Thus 
\[
  \a_g\bigl(f(P)\bigr)=\a_f\bigl(f(P)\bigr) =\a_f(P),
\]
where the last equality is~\cite[Lemma~12]{kawsilv:arithdegledyndeg}.
Hene~$\a_f(P)=\d_g$.
\end{proof}

\subsection{Proof of Theorem~\ref{theorem:casesconjistrue}(c)}

In order to prove Theorem~\ref{theorem:casesconjistrue}(c), which
deals with automorphisms of smooth projective surfaces, we use the
following construction of the first author~\cite{kawtopent2005}, which
generalized the second author's construction on~K3
surfaces~\cite{silverman:K3heights}. We also refer the reader
to~\cite{arxiv1202.0203}, which gives results for surface maps of small
topological degree.

\begin{theorem}
\label{theorem:surfaceauthts}
\textup{(\cite{kawtopent2005})}
Let $X$ be a smooth projective surface defined over~$\Qbar$, and let
$f:X\to X$ be an automorphism with~$\d_f>1$.
\begin{parts}
\Part{(a)}
There are only finitely many $f$-periodic irreducible curves in~$X$.
Let~$E_f$ be the union of these curves.
\Part{(b)}
There are divisors~$D^+$ and~$D^-$ in~$\Div(X)_\RR$ and associated
canonical height functions~$\hhatplus$ and~$\hhatminus$ satisfying
\[
  \hhatpm = h_{D^\pm}+O(1)
  \quad\text{and}\quad
  \hhatpm\circ f^{\pm1}=\d_f\hhatpm.
\]
\Part{(c)}
$\hhatplus+\hhatminus$ is a Weil height for a divisor in $\Div(X)_\RR$
that is nef and big.
\Part{(d)}
$\hhatplus(P)\ge0$ and $\hhatminus(P)\ge0$ for all $P\in X(\Qbar)$.
\Part{(e)}
Let~$P\in (X\setminus E_f)(\Qbar)$. Then
\[
  \hhatplus(P)=0
  \quad\Longleftrightarrow\quad
  \hhatminus(P)=0
  \quad\Longleftrightarrow\quad
  \text{$P$ is periodic.}
\]
\end{parts}
\end{theorem}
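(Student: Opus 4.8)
The plan is to build, as in the constructions of canonical heights for K3 and more general surface automorphisms, a pair of canonical heights attached to the expanding and contracting eigendirections of $f^*$, working directly on $\NS(X)_\RR$, and then to read each of (a)--(e) off from their properties. First I would do the linear algebra. Since $f$ is an automorphism, $f^*$ is an isometry of the intersection form on $\NS(X)_\RR$, which by the Hodge index theorem has signature $(1,\rho-1)$ with $\rho=\rank\NS(X)$, and $f^*$ sends ample classes to ample classes, so it preserves the full-dimensional cone $\Nef(X)$; Perron--Frobenius for cone maps then produces a nonzero nef class $\theta^+$ with $f^*\theta^+=\d_f\theta^+$ (the Perron eigenvalue equals $\rho(f^*)=\d_f$), and applying this to $(f^{-1})^*$, with $\d_{f^{-1}}=\d_f$, gives a nonzero nef $\theta^-$ with $f^*\theta^-=\d_f^{-1}\theta^-$. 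Because $f^*$ is an isometry and $\d_f^2\ne1$ one gets $(\theta^+)^2=(\theta^-)^2=0$, and since $\theta^+,\theta^-$ are non-proportional isotropic nef classes in a Lorentzian space, $\theta^+\cdot\theta^->0$; I normalize $\theta^+\cdot\theta^-=1$, so that $\Pi:=\langle\theta^+,\theta^-\rangle$ has signature $(1,1)$ and $N:=\Pi^{\perp}$ (which $f^*$ preserves) is negative definite.

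Next I would construct the heights and deduce (b), (c), (d). I pick $\RR$-divisors $D^+,D^-$ with classes $\theta^+,\theta^-$ and lift the eigenrelations along $\Pic(X)_\RR\to\NS(X)_\RR$ so that $f^{*}D^{\pm}\sim\d_f^{\pm1}D^{\pm}$ in $\Pic(X)_\RR$, which makes $h_{D^+}\circ f=\d_f h_{D^+}+O(1)$ and $h_{D^-}\circ f^{-1}=\d_f h_{D^-}+O(1)$. Since $\d_f>1$, the usual telescoping shows $\d_f^{-n}h_{D^+}(f^nP)$ converges, and $\hhatplus(P):=\lim_n\d_f^{-n}h_{D^+}(f^nP)$ satisfies $\hhatplus=h_{D^+}+O(1)$ and $\hhatplus\circ f=\d_f\hhatplus$, with $\hhatminus$ built symmetrically; this is (b). Then (c) follows since $\hhatplus+\hhatminus=h_{D^++D^-}+O(1)$ and $\theta^++\theta^-$ is nef with $(\theta^++\theta^-)^2=2>0$, hence nef and big. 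For (d), nef-ness of $\theta^+$ gives $h_{D^+}\ge -C$ for a constant $C$, so $\hhatplus(P)=\lim_n\d_f^{-n}h_{D^+}(f^nP)\ge\lim_n\d_f^{-n}(-C)=0$, and similarly $\hhatminus\ge0$.

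Finally I would handle (a) and (e). If $C$ is an $f$-periodic irreducible curve, then $(f^{m})^{*}[C]=[C]$ for some $m\ge1$; applying the isometry $(f^{*})^{m}$ to the pair $(\theta^{+},[C])$ gives $\d_f^{m}(\theta^{+}\cdot[C])=\theta^{+}\cdot[C]$, so $\theta^{+}\cdot[C]=0$, and likewise $\theta^{-}\cdot[C]=0$; thus $[C]\in N$ and $C^{2}<0$. Distinct irreducible curves meet non-negatively, so they give pairwise non-acute, nonzero vectors for the positive definite form $-(\,\cdot\,,\cdot\,)$ on $N$, of which there can be at most $2\dim N$; hence $E_f$ is a finite union of curves, proving (a). For (e): if $P$ is periodic, the functional equations force $\hhatplus(P)=\hhatminus(P)=0$. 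Conversely, assume $\hhatplus(P)=0$ with $P\notin E_f$; then $\hhatplus$ vanishes along the whole orbit and $h_{D^+}(f^nP)=O(1)$ for $n\ge0$, while for any $\epsilon>0$ the class $\theta^++\epsilon\theta^-$ is nef and big and $\hhatminus(f^nP)=\d_f^{-n}\hhatminus(P)\to0$, so $h_{D^++\epsilon D^-}(f^nP)=h_{D^+}(f^nP)+\epsilon\hhatminus(f^nP)+O_\epsilon(1)$ stays bounded as $n\to\infty$. Since all the $f^n(P)$ lie in one number field, Northcott applied to the big $\RR$-divisor $D^++\epsilon D^-$ (write it as ample plus effective) confines the forward orbit to a finite set together with finitely many curves; those curves are necessarily $f$-periodic, hence contained in $E_f$, which is impossible for $P\notin E_f$ unless the forward orbit is finite, i.e. $P$ is periodic. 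The implication from $\hhatminus(P)=0$ is symmetric, and combined with the functional equations this gives all of (e).

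I expect the crux to be precisely this converse in (e): since $(\theta^+)^2=0$, the divisor $D^+$ is nef but not big, so boundedness of $h_{D^+}$ along an orbit does not by itself bound the orbit, and the device that rescues the argument is the perturbation to the big nef divisor $D^++\epsilon D^-$, which stays bounded along forward orbits exactly because $\hhatminus$ decays geometrically forward (and symmetrically for $\hhatminus$). A secondary technical point is the lifting across $\Pic(X)_\RR\to\NS(X)_\RR$ in the second step, needed to obtain the clean error $O(1)$ in $\hhatpm=h_{D^\pm}+O(1)$ rather than $O(\sqrt{h})$.
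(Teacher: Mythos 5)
The paper does not actually prove this theorem: its ``proof'' is a citation of \cite{kawtopent2005} (Proposition~3.1 for part~(a), Theorem~5.2 and Proposition~5.5 for the rest). Your reconstruction follows the same route as that reference — Perron--Frobenius eigenclasses $\theta^{\pm}$ in the nef cone with $f^*\theta^{\pm}=\d_f^{\pm1}\theta^{\pm}$, isotropy of $\theta^{\pm}$ and negative definiteness of $\langle\theta^+,\theta^-\rangle^{\perp}$ from the Hodge index theorem, Tate telescoping for $\hhatpm$, and Northcott for the nef and big class $\theta^++\theta^-$ — so the architecture is right. Two steps, however, are not justified as written.

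First, in (d) you assert that nefness of $\theta^+$ gives $h_{D^+}\ge -C$. That is not a valid general principle: heights attached to nef (non-semiample, non-effective) classes are not known to be bounded below. What nefness gives is that $D^++\epsilon H$ is ample for every ample $H$ and $\epsilon>0$, hence $h_{D^+}\ge -\epsilon h_H-C_\epsilon$; to conclude $\hhatplus(P)=\lim_n\d_f^{-n}h_{D^+}(f^nP)\ge -\epsilon\limsup_n\d_f^{-n}h_H(f^nP)$ and let $\epsilon\to0$, you still need the a priori estimate $h_H(f^nP)=O(\d_f^n)$ with a constant independent of $n$. Establishing that uniform growth bound (via the eigendecomposition of $(f^n)^*H$ in $\NS(X)_\RR$ and uniform comparison of Weil heights) is precisely where the real work in \cite{kawtopent2005} lies, and your proposal elides it. Second, in the converse direction of (e), after Northcott confines the forward orbit to $\Supp E\cup(\text{finite set})$ you claim the components of $\Supp E$ meeting the orbit infinitely often ``are necessarily $f$-periodic.'' A curve containing infinitely many orbit points is not obviously periodic; the standard repair is to take the Zariski closure $Y$ of the forward orbit, note that $Y\supseteq f(Y)\supseteq f^2(Y)\supseteq\cdots$ stabilizes so a tail of the orbit lies in an $f$-invariant curve (it cannot be all of $X$ since the orbit lies in a proper closed subset), whence some $f^n(P)$ lies on a periodic curve; since $E_f$ is completely invariant this forces $P\in E_f$, a contradiction. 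With these two repairs your argument is the proof in the cited reference.
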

\begin{proof}
(a) is \cite[Proposition~3.1]{kawtopent2005}.  The rest of
Theorem~\ref{theorem:surfaceauthts} is
\cite[Theorem~5.2]{kawtopent2005} (including the proof) and
\cite[Proposition~5.5]{kawtopent2005}.
\end{proof}

\begin{theorem}
\label{theorem:surfaceautafPeqdf}
Let $X$ be a smooth projective surface defined over~$\Qbar$, let
$f:X\to X$ be an automorphism, and let~$E_f$ be the union of the
$f$-periodic irreducible curves in~$X$ as in
Theorem~$\ref{theorem:surfaceauthts}$.  Then for all~$P\in X(\Qbar)$,
\[
  \a_f(P) = \begin{cases}
  1&\text{if $P$ is periodic or $P\in E_f$,} \\
  \d_f&\text{if $P$ is wandering and $P\notin E_f$.} \\
  \end{cases}
\]
\end{theorem}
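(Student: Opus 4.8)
The plan is to split into two cases according to whether $\d_f=1$ or $\d_f>1$, and in the latter case to exploit the canonical heights $\hhatplus$ from Theorem~\ref{theorem:surfaceauthts}. First, if $\d_f=1$, then the fundamental inequality~\eqref{eqn:fundineq} forces $1\le\alower_f(P)\le\aupper_f(P)\le\d_f=1$, so $\a_f(P)=1=\d_f$ for every $P$, and the statement holds trivially (note that when $\d_f=1$ there is no dichotomy to prove). So assume $\d_f>1$. If $P$ is periodic, then $\a_f(P)=1$ directly from the definition since the orbit is finite. If $P\in E_f(\Qbar)$, then $P$ lies on an $f$-periodic irreducible curve $C$; replacing $f$ by a suitable iterate $f^m$ that fixes $C$, the restriction $f^m|_C:C\to C$ is an automorphism of a curve, hence has dynamical degree $1$, so by Theorem~\ref{theorem:casesconjistrue}(a) applied to the normalization of $C$ (or by the trivial height bounds for automorphisms of curves) we get $\a_{f^m}(P)=1$, and then $\a_f(P)=1$ by the relation $\a_{f^m}(P)=\a_f(P)^m$ from~\cite{kawsilv:arithdegledyndeg}. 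This disposes of the first branch.

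For the second branch, suppose $P$ is wandering and $P\notin E_f$. The upper bound $\aupper_f(P)\le\d_f$ is~\eqref{eqn:fundineq}, so it suffices to prove $\alower_f(P)\ge\d_f$. By Theorem~\ref{theorem:surfaceauthts}(e), since $P\notin E_f$ and $P$ is not periodic, we have $\hhatplus(P)>0$. By Theorem~\ref{theorem:surfaceauthts}(b), $\hhatplus\circ f=\d_f\hhatplus$, so $\hhatplus\bigl(f^n(P)\bigr)=\d_f^{\,n}\hhatplus(P)$. Again by Theorem~\ref{theorem:surfaceauthts}(b), $\hhatplus=h_{D^+}+O(1)$ for the $\RR$-divisor $D^+$; but to bound $h_X$ from below we need $h_{D^+}\le h_X+O(1)$, i.e.\ we want $D^+$ to be dominated by an ample class. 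This is where part~(c) enters: $\hhatplus+\hhatminus$ is a nef and big Weil height, and $\hhatminus\ge0$ by part~(d), so $\hhatplus(Q)\le(\hhatplus+\hhatminus)(Q)=h_{D}(Q)+O(1)$ for a nef big divisor $D$; since $D$ nef and big implies $D\preceq cA$ is false in general but $D$ being big means $h_D\le c\,h_X+O(1)$ away from the stable base locus, one instead uses that a nef divisor satisfies $h_D\le c\,h_A+O(1)$ globally for $A$ ample and $c$ large. Concretely: choose an ample $A$ with $A-D^+$ (or $A-(D^++D^-)$) effective-up-to-$\RR$-coefficients is too strong, so we argue via $D^+$ being a limit of classes dominated by ample ones, or simply note $D^+\cdot A'\ge 0$ implications; the clean route is $\hhatplus\le\hhatplus+\hhatminus = h_{D}+O(1)\le C\cdot\hplus_X+O(1)$ valid because any fixed divisor class has its height bounded by a multiple of an ample height plus $O(1)$. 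Granting this, we get
\[
  C\cdot\hplus_X\bigl(f^n(P)\bigr)+O(1)\ \ge\ \hhatplus\bigl(f^n(P)\bigr)\ =\ \d_f^{\,n}\hhatplus(P),
\]
hence $\hplus_X\bigl(f^n(P)\bigr)\ge \d_f^{\,n}\hhatplus(P)/(2C)$ for $n$ large, and therefore
\[
  \alower_f(P)=\liminf_{n\to\infty}\hplus_X\bigl(f^n(P)\bigr)^{1/n}\ \ge\ \lim_{n\to\infty}\Bigl(\tfrac{\hhatplus(P)}{2C}\Bigr)^{1/n}\d_f\ =\ \d_f.
\]
Combining with~\eqref{eqn:fundineq} shows the limit defining $\a_f(P)$ exists and equals $\d_f$.

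The main obstacle is the passage from the canonical height $\hhatplus$, which is attached to an $\RR$-divisor $D^+$ that need not be ample, to a genuine lower bound in terms of the ample height $h_X$. The resolution is exactly Theorem~\ref{theorem:surfaceauthts}(c,d): $\hhatplus\le \hhatplus+\hhatminus$ is (up to $O(1)$) the height of a nef divisor, and for any nef (more generally, any effective or any fixed) $\RR$-divisor $D$ on a projective variety one has $h_D\le C\,\hplus_X+O(1)$ for a suitable constant $C$, because writing $D$ in terms of a basis and dominating each basis element by an ample class gives the bound. I should also take care that the relation $\a_{f^m}(P)=\a_f(P)^m$ and the invariance of $\a_f$ under replacing $f$ by an iterate (used in the $E_f$ case) are available from~\cite{kawsilv:arithdegledyndeg}, which they are. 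With these two ingredients the proof is a routine combination of the fundamental inequality and the canonical-height growth $\hhatplus\circ f^n=\d_f^{\,n}\hhatplus$.
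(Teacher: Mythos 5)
Your proposal is correct and follows essentially the same route as the paper: the same trichotomy ($\d_f=1$; periodic or on $E_f$, handled by passing to an iterate that stabilizes each periodic curve; wandering off $E_f$, handled by $\hhatplus(P)>0$ and $\hhatplus\circ f^n=\d_f^n\hhatplus$ combined with the fundamental inequality~\eqref{eqn:fundineq}). The only cosmetic difference is that the paper computes $\alower_f(P)$ directly with the nef-and-big height $h_X=\hhatplus+\hhatminus$ (using height-independence of the arithmetic degree), whereas you bound $\hhatplus\le C\,\hplus_X+O(1)$ via the same parts (c) and (d) of Theorem~\ref{theorem:surfaceauthts}; these are equivalent.
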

\begin{proof}
If~$\d_f=1$, then using~\eqref{eqn:fundineq}, we have as usual
\[
  1 = \d_f \ge  \aupper_f(P) \ge \alower_f(P) \ge  1,
\]
so~$\a_f(P)=\d_f=1$. Similarly, if~$P$ is a periodic point, then
directly from the definition we have~$\a_f(P)=1$.
\par
We assume henceforth that~$\d_f>1$ and that~$P$ is not periodic.
If~$E_f$ is non-empty, let~$\f:E_f\to E_f$ denote the restriction
of~$f$ to~$E_f$. Writing~$E=\bigcup C_i$ as a finite union of
irreducible curves, there is an iterate~$\f^m$ such
that~$\f^m\in\Aut(C_i)$ for all~$i$.  Considering the three cases of
genus~$0$,~$1$, and greater than~$1$, we see that automorphisms of curves
have dynamical degree~$1$, so~$\d_{f^m}(C_i)=1$. It follows as above
that~$\a_{f^m}(P)=1$, since we can restrict an ample height on~$X$ to
each~$C_i$. Replacing~$P$ by~$f^i(P)$ for~$0\le i<m$, we deduce
that~$\a_f(P)=1$.
\par
We are now reduced to the case
that~$\d_f>1$,~$P\notin E_f$, and~$P$ is not periodic.
Let~$\hhatpm$ be the canonical heights associated to~$D^\pm$
as described in Theorem~\ref{theorem:surfaceauthts}.
In particular, we have
\[
  \hhatplus(P)>0,
  \quad
  \hhatplus\bigl(f^n(P)\bigr) = \d_f^n\hhatplus(P),
  \quad\text{and}\quad
  \hhatminus\bigl(f^n(P)\bigr) = \d_f^{-n}\hhatminus(P).
\]
We set~$h_X=\hhatplus+\hhatminus$, which is a Weil height associated to
a divisor that is big and nef. This allows us to compute
\begin{align*}
  \alower_f(P)
  &= \liminf_{n\to\infty} \hplus_X\bigl(f^n(P)\bigr)^{1/n} \\
  &= \liminf_{n\to\infty} \bigl(\hhatplus\bigl(f^n(P)\bigr)
           + \hhatminus\bigl(f^n(P)\bigr)\bigr)^{1/n} \\
  &= \liminf_{n\to\infty} \bigl(\d_f^n\hhatplus(P)
           + \d_f^{-n}\hhatminus(P)\bigr)^{1/n} \\
  &= \d_f,
\end{align*}
where to deduce the final equality, we are using the fact that
Theorem~\ref{theorem:surfaceauthts} tells us that~$\hhatplus(P)>0$.
\par
On the other hand, we know from~\eqref{eqn:fundineq} that the upper
arithmetic degree satisfies $\d_f\ge\aupper_f(P)$, so we have proven
that
\begin{equation}
  \label{eqn:alfPgedfgeaufP}
  \alower_f(P) \ge \d_f \ge \aupper_f(P) \ge \alower_f(P).
\end{equation}
Hence all of these quantities are equal, which proves that the
limit $\a_f(P)$ exists and is equal to~$\d_f$.
\end{proof}

\begin{remark}
Interesting cases to which Theorem~\ref{theorem:surfaceautafPeqdf}
applies are compositions of noncommuting involutions of~K3 surfaces
in~$\PP^2\times\PP^2$ and~$\PP^1\times\PP^1\times\PP^1$. The height
theory of these maps was studied in
in~\cite{MR1397435,baragarluijk06,silverman:K3heights,MR1352278}, and
Theorem~\ref{theorem:surfaceautafPeqdf} for K3~surfaces in
$\PP^2\times\PP^2$ was already proved by a similar argument
in~\cite[Section~12]{arxiv1111.5664}.  There are also higher
dimensional versions of these constructions in which the associated
involutions are rational maps, not morphisms.  It would be interesting
to study~$\a_f(P)$ for these reversible dynamical systems on
Calabi--Yau varieties.
\end{remark}

\subsection{Proof of Theorem~\ref{theorem:casesconjistrue}(d)}

The case of monomial maps described in
Theorem~\ref{theorem:casesconjistrue}(d) is an immediate consequence
of results in~\cite{arxiv1111.5664}.

\begin{theorem}
\label{theorem:afPmonomial}
Let~$A=(a_{ij})$ be an $N$-by-$N$ matrix with integer coefficients
and~$\det(A)\ne0$, and let~$f_A:\PP^N\dashrightarrow\PP^N$ be the
associated momonial map extending the endomorphism
of~$\GG_m^N\to\GG_m^N$ defined by~$A$, i.e., extending the map
\[
  (t_1,\ldots,t_N)\longmapsto
   \left(
    t_1^{a_{11}}t_2^{a_{12}}\cdots t_N^{a_{1N}},\ldots,
    t_1^{a_{N1}}t_2^{a_{N2}}\cdots t_N^{a_{NN}} \right).
\]
\begin{parts}
\Part{(a)}
The set of arithemtic degrees of~$f_A$ for points in~$\GG_m^N(\Qbar)$
satisfies
\[
  \bigl\{\a_{f_A}(P):P\in\GG_m^N(\Qbar)\bigr\}
  \subset \bigl\{\text{eigenvalues of $A$}\bigr\}.
\]
In particular, $\a_{f_A}(P)$ is an algebraic integer.
\Part{(b)}
If~$P\in\GG_m^N(\Qbar)$ has Zariski dense orbit, then~$\a_{f_A}(P)=\d_{f_A}$.
\Part{(c)}
If~$P\in\GG_m^N(\Qbar)$ satisfies~$\a_{f_A}(P)<\d_{f_A}$, then the
orbit of~$P$ lies in a proper $f_A$-invariant algebraic subgroup
of~$\GG_m^N$.
\end{parts}
\end{theorem}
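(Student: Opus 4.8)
The plan is to deduce all three parts from the structure theory of monomial maps in~\cite{arxiv1111.5664}; the work here is mainly to extract the right statement from that paper and then run a short linear-algebra argument. The starting point is the standard description of heights along a monomial orbit. Fix a number field~$K$ over which $P=(t_1,\dots,t_N)$ is defined, and for each place~$v$ of~$K$ put $\bfw_v=\bigl(\log|t_1|_v,\dots,\log|t_N|_v\bigr)\in\RR^N$. Since $f_A$ acts on these $\log$-coordinates through the matrix~$A$, computing the Weil height of $f_A^n(P)=\bigl(\prod_j t_j^{(A^n)_{1j}},\dots,\prod_j t_j^{(A^n)_{Nj}}\bigr)$ on~$\PP^N$ gives, up to a normalizing constant,
\[
  h_{\PP^N}\bigl(f_A^n(P)\bigr)=\sum_v\max\bigl\{0,(A^n\bfw_v)_1,\dots,(A^n\bfw_v)_N\bigr\}.
\]
Let $W\subseteq\QQ^N$ be the smallest $\QQ$-rational $A$-invariant subspace containing every~$\bfw_v$. (That the $\bfw_v$ span a $\QQ$-rational subspace of~$\RR^N$, and not merely a real one, already requires an argument---in essence the Dirichlet unit theorem together with the integrality of~$\ord_v$---and is part of what one imports from~\cite{arxiv1111.5664}.) The input I would quote from~\cite{arxiv1111.5664} is that the limit defining $\a_{f_A}(P)$ exists and
\[
  \a_{f_A}(P)=\rho\bigl(A|_W\bigr),
\]
with the convention $\rho(A|_0)=1$, the case in which $P$ is preperiodic.

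Granting this formula, part~(a) is immediate: a $\ZZ$-basis of $W\cap\ZZ^N$ presents $A|_W$ as an integer matrix whose characteristic polynomial divides that of~$A$, so $\rho(A|_W)=|\lambda|$ for some eigenvalue~$\lambda$ of~$A$, and $|\lambda|$ is an algebraic integer because $|\lambda|^2=\lambda\bar\lambda$ is one and the square root of an algebraic integer is an algebraic integer. Since $\d_{f_A}=\rho(A)$ (which is known; see~\cite{MR2358970}) and $W\subseteq\QQ^N$, we also recover $\a_{f_A}(P)=\rho(A|_W)\le\rho(A)=\d_{f_A}$, in accordance with~\eqref{eqn:fundineq}.

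For parts~(b) and~(c) I would use characters. For $\bfc\in\ZZ^N$ let $\chi_{\bfc}$ be the character $x\mapsto\prod_j x_j^{c_j}$ of~$\GG_m^N$, so that $\chi_{\bfc}\circ f_A=\chi_{A^{\mathsf T}\bfc}$. Suppose $\a_{f_A}(P)<\d_{f_A}$. Then $\rho(A|_W)<\rho(A)$, hence $W\subsetneq\QQ^N$ and $W^{\perp}\neq 0$; moreover $W^{\perp}$ is $A^{\mathsf T}$-stable, and since every $\bfw_v$ lies in~$W$ we have $|\chi_{\bfc}(P)|_v=1$ for every place~$v$ and every $\bfc\in W^{\perp}\cap\ZZ^N$, so $\chi_{\bfc}(P)$ is a root of unity by Kronecker's theorem. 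Thus $\{\chi_{\bfc}(P):\bfc\in W^{\perp}\cap\ZZ^N\}$ is a finitely generated, hence finite, group of roots of unity, say of exponent~$M$. Combining the intertwining relation with the $A^{\mathsf T}$-stability of~$W^{\perp}$ gives $\chi_{\bfc}\bigl(f_A^n(P)\bigr)^M=\chi_{(A^{\mathsf T})^n\bfc}(P)^M=1$ for all $n\ge0$ and all $\bfc\in W^{\perp}\cap\ZZ^N$, so the orbit of~$P$ lies in the algebraic subgroup $G=\bigcap_{\bfc\in W^{\perp}\cap\ZZ^N}\ker\chi_{M\bfc}$. This $G$ is $f_A$-invariant (again by the intertwining relation) and proper, since $\dim G=\dim W<N$; that is part~(c). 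For part~(b), if in addition $\Orbit_{f_A}(P)$ is Zariski dense in~$\PP^N$---equivalently in~$\GG_m^N$---then it cannot lie in the proper closed subgroup~$G$, a contradiction, so $\a_{f_A}(P)=\d_{f_A}$.

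The one place where real work is hidden is the formula $\a_{f_A}(P)=\rho(A|_W)$, so the bulk of a self-contained proof would be a reconstruction of that result from~\cite{arxiv1111.5664}. The upper bound $\aupper_{f_A}(P)\le\rho(A|_W)$ is routine, since for each~$v$ one has $\max\{0,(A^n\bfw_v)_1,\dots\}\le\|A^n\bfw_v\|$, which grows no faster than $\rho(A|_W)^n$ up to polynomial factors. The subtlety is the matching lower bound: for a single~$v$ the quantity $\max\{0,(A^n\bfw_v)_1,\dots\}$ may vanish even when $\|A^n\bfw_v\|$ is large, and the remedy is the product formula $\sum_v\bfw_v=\bfzero$, which forces $\sum_v(A^n\bfw_v)_i=0$ for each~$i$; summing the positive parts over~$v$ then recovers a fixed fraction of $\sum_v\|A^n\bfw_v\|_1\ge\|A^n\bfw_{v_0}\|_1$ for any chosen place~$v_0$. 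Taking~$v_0$ so that $\bfw_{v_0}$ has a nonzero component in a generalized eigenspace of~$A$ of maximal modulus among those met by the~$\bfw_v$ yields $\alower_{f_A}(P)\ge\rho(A|_W)$ and, together with the upper bound, the existence of the limit.
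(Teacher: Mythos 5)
Your proposal is correct and takes essentially the same route as the paper, whose entire proof of this theorem consists of citing \cite{arxiv1111.5664} (Corollary~32 for~(a), Proposition~19(d) and Corollary~29 for~(b) and~(c)); you simply reconstruct the content of those results, and the reconstruction — the $\QQ$-rationality of the span of the $\bfw_v$ via Northcott/Dirichlet, the formula $\a_{f_A}(P)=\rho(A|_W)$, and the character argument giving the proper invariant subgroup — is sound. The only caveat, which applies equally to the theorem as stated, is that ``eigenvalues of $A$'' in~(a) must be read as ``absolute values of eigenvalues of $A$,'' which is exactly what your argument produces.
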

\begin{proof}
This (and more) is proven in~\cite{arxiv1111.5664}.  In
particular,~(a) follows from~\cite[Corollary~32]{arxiv1111.5664},
and~(b) and~(c) follow by 
combining~\cite[Proposition~19(d) and Corollary~29]{arxiv1111.5664}.
\end{proof}

\section{Large Sets of Points Satisfying $\a_f(P)=\d_f$}
\label{section:densesets}

In this section we describe our main results concerning large sets for
which we can prove that $\a_f(P)=\d_f$. The proofs are given in
subsequent sections. We begin with a standard definition.

\begin{definition}
Let $f:X\dashrightarrow X$ be a dominant rational map.  The map~$f$
is said to be \emph{algebraically stable} (in codimension~$1$) if
the induced maps on $\NS(X)_\RR$ satisfy $(f^*)^n=(f^n)^*$ for
all~$n\ge1$.
\end{definition}

We note that one consequence of algebraic stability is that~$\d_f$ is
simply the largest eigenvalue of~$f^*$. In particular, if~$X=\PP^N$
and~$f$ is algebraically stable, then~$\d_f=\deg(f)$.  


We recall that Theorem~\ref{theorem:mainthma} states that for certain
affine morphisms $f:\AA^2\to\AA^2$, there is a large set of points~$P$
such that~$\a_f(P)=\d_f$.  Our proof of Theorem~\ref{theorem:mainthma}
actually yields a stronger result, which we now describe.

\begin{definition}
Let~$f:X\dashrightarrow X$ be a rational map defined over~$\Qbar$ with
dynamical degree~$\d_f>1$, and let~$D\in\Div(X)_\RR$.  The \emph{weak
  lower canonical height} associated to~$f$ and~$D$ is the function
\[
  \hhatlower_{f,D}^\circ : X(\Qbar)_f\longrightarrow\RR\cup\{\infty\},\qquad
  \hhatlower_{f,D}^\circ(P) 
    = \liminf_{n\to\infty} \frac{h_D\bigl(f^n(P)\bigr)}{\d_f^n}.
\]
Here~$h_D$ is any Weil height associated to~$D$.  Since any two such
heights differ by~$O(1)$, we see that the value
of~$\hhatlower_{f,D}^\circ(P)$ is independent of the choice of~$h_D$.
We also note that~$\infty$ is an allowable value
of~$\hhatlower_f^\circ$.
\end{definition}

\begin{remark}
The canonical height associated to eigendivisors of morphisms was
defined in~\cite{callsilv:htonvariety}. A more general definition for
rational maps $f:\PP^N\dashrightarrow\PP^N$, described by the second
author in~\cite{arxiv1111.5664}, is
\[
  \hhat_f(P) =
  \limsup_{n\to\infty} \frac{h\bigl(f^n(P)\bigr)}{n^\ell\d_f^n},
\]
where~$\ell\ge0$ is determined by the conjectural estimate
$\deg(f^n)\approx n^\ell\d_f^n$ as~$n\to\infty$.  The
function~$\hhatlower_f^\circ$ differs from~$\hhat_f$ in two ways.
First, it is defined using the liminf, rather than the limsup.
Second, the denominator includes only~$\d_f^n$, it has no~$n^\ell$
correction factor. The utility of the weak lower canonical height in
studying arithmetic degrees is explained below in
Proposition~\ref{proposition:hfge0impafeqdf}.  See
also~\cite{arxiv1202.0203} for additional material on canonical
heights attached to dominant rational maps.
\end{remark}

For self-maps of~$\PP^N$, it is proven in~\cite{arxiv1111.5664} that
\begin{equation}
  \label{eqn:hfPgt0impafPeqdf}
  \hhat_f(P)>0\quad\Longrightarrow\quad\aupper_f(P)=\d_f,
\end{equation}
so the positivity of~$\hhat_f(P)$ is at least as strong as the
equality of the dynamical degree and the upper arithmetic degree. The
proof works, \emph{mutatis mutandis}, to show that
if~$\hhatlower_f^\circ(P)>0$, then~$\alower_f(P)\ge\d_f$, and combined
with~\eqref{eqn:fundineq}, this implies that~$\a_f(P)=\d_f$, as in the
following useful result.

\begin{proposition}
\label{proposition:hfge0impafeqdf}
Let $f:X\dashrightarrow X$ be a dominant rational map defined
over~$\Qbar$ with dynamical degree satisfying~$\d_f>1$,
let~$D\in\Div(X)_\RR$ be any divisor, and let~$P\in X(\Qbar)_f$. Then
\[
  \hhatlower_{f,D}^\circ(P)>0
  \quad\Longrightarrow\quad
  \a_f(P)=\d_f.
\]
In particular, the limit~\eqref{eqn:limdefafP} defining~$\a_f(P)$
converges.
\end{proposition}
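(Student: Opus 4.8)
The plan is to reduce the statement to the fundamental inequality~\eqref{eqn:fundineq} together with a lower bound coming from the hypothesis $\hhatlower_{f,D}^\circ(P)>0$. The essential point is that the weak lower canonical height is defined using a height $h_D$ for an \emph{arbitrary} divisor $D\in\Div(X)_\RR$, whereas the arithmetic degree $\alower_f(P)$ is defined using a height relative to an \emph{ample} divisor; the main obstacle is therefore to bridge this gap between an arbitrary $D$ and an ample class.

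First I would fix an ample divisor $H$ on $X$ and recall from~\cite{kawsilv:arithdegledyndeg} that the value of $\alower_f(P)$ is independent of the choice of ample height, so we may compute it with $h_H$. Next, since $X$ is projective, the ample class $H$ dominates $D$ in the sense that there is an integer $m\ge1$ with $mH - D$ effective (more precisely, after scaling so that $D$ has an integral representative, $mH-D$ is linearly equivalent to an effective divisor, or one can simply use that $mH\pm D$ is ample for $m\gg0$). Hence there is a constant $C$ with
\[
  h_D(Q) \le m\,h_H(Q) + C \qquad\text{for all } Q\in X(\Qbar).
\]
Applying this with $Q=f^n(P)$, dividing by $\d_f^n$, and taking $\liminf$ gives
\[
  0 < \hhatlower_{f,D}^\circ(P)
  = \liminf_{n\to\infty}\frac{h_D\bigl(f^n(P)\bigr)}{\d_f^n}
  \le m\liminf_{n\to\infty}\frac{h_H\bigl(f^n(P)\bigr)}{\d_f^n},
\]
so $\liminf_{n\to\infty} h_H\bigl(f^n(P)\bigr)/\d_f^n =: c > 0$. (One must note here that this forces $h_H(f^n(P))\to\infty$, so in particular $P$ is not preperiodic and $\hplus_H(f^n(P)) = h_H(f^n(P))$ for $n$ large, which is what lets us pass between $h_H$ and $\hplus_H$ in the next step.)

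Then I would extract the lower bound on $\alower_f(P)$. Choose $n_0$ so that $h_H\bigl(f^n(P)\bigr) \ge \tfrac{c}{2}\,\d_f^n$ for all $n\ge n_0$; this is possible since the $\liminf$ equals $c>0$. For such $n$,
\[
  \hplus_H\bigl(f^n(P)\bigr)^{1/n}
  \ge \left(\frac{c}{2}\,\d_f^{\,n}\right)^{1/n}
  = \left(\frac{c}{2}\right)^{1/n}\d_f,
\]
and letting $n\to\infty$ yields $\alower_f(P) = \liminf_{n\to\infty}\hplus_H\bigl(f^n(P)\bigr)^{1/n} \ge \d_f$. On the other hand, the fundamental inequality~\eqref{eqn:fundineq} gives $\aupper_f(P)\le\d_f$. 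Combining,
\[
  \d_f \le \alower_f(P) \le \aupper_f(P) \le \d_f,
\]
so all four quantities are equal; in particular the limit~\eqref{eqn:limdefafP} exists and $\a_f(P)=\d_f$. The only subtlety worth flagging in the write-up is the comparison $h_D \le m\,h_H + C$ for a possibly non-integral $\RR$-divisor $D$: one handles this by writing $D$ as an $\RR$-linear combination of integral divisors, bounding each against $H$ by ampleness of $H$ (or of a suitable multiple), and absorbing finitely many $O(1)$ terms, exactly as in the standard functoriality properties of Weil heights.
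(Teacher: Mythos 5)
Your proof is correct and follows essentially the same route as the paper: extract from $\hhatlower_{f,D}^\circ(P)>0$ a bound $h\bigl(f^n(P)\bigr)\ge c\,\d_f^n/2$ for large $n$, compare $h_D$ with an ample height via standard Weil height functoriality (the paper picks $H$ with $H-D$ ample and uses $h_H\ge h_D-C$, which is the same comparison in the other direction as your $h_D\le m\,h_H+C$), conclude $\alower_f(P)\ge\d_f$, and combine with the fundamental inequality~\eqref{eqn:fundineq}. No gaps; your remark about passing from $h_H$ to $\hplus_H$ is a minor point the paper leaves implicit.
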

\begin{proof}
The assumption that $\hhatlower_{f,D}^\circ(P)>0$ implies in
particular that~$P$ is a wandering point. Further, since by definition
the height~$\hhatlower_{f,D}^\circ(P)$ is the liminf of
$\d_f^{-n}h_D\bigl(f^n(P)\bigr)$, we can find an integer~$n_0$ so that
\[ 
   \d_f^{-n} h_D\bigl(f^n(P)\bigr) \ge \frac12 \hhatlower_{f,D}^\circ(P) > 0
   \quad\text{for all $n\ge n_0$.}
\]
It follows that
\[
  \liminf_{n\to\infty} h_D\bigl(f^n(P)\bigr)^{1/n} 
 \ge \liminf_{n\to\infty}
    \d_f\left(\frac{1}{2} \hhatlower_{f,D}^\circ(P)\right)^{1/n} 
 = \d_f.
\]
\par
Let~$H\in\Div(X)$ be an ample divisor such that~$H-D$ is also ample. 
Then $h_{H}\ge h_D-C$ for a constant~$C$, so
\begin{align*}
  \alower_f(P)
  &= \liminf_{n\to\infty} \hplus_H\bigl(f^n(P)\bigr)^{1/n}\\*
  &\ge \liminf_{n\to\infty} \bigl(h_D\bigl(f^n(P)\bigr)-C\bigr)^{1/n} \\*
  &\ge \d_f.
\end{align*}
\par
This lower bound, combined with the upper bound~$\aupper_f(P)\le\d_f$
from~\eqref{eqn:fundineq}, implies that~$\a_f(P)$ exists and
equals~$\d_f$; cf.\ The final set~\eqref{eqn:alfPgedfgeaufP}
in the proof of Theorem~\ref{theorem:surfaceautafPeqdf}.
\end{proof}

\begin{question}
In the context of Proposition~\ref{proposition:hfge0impafeqdf},
if~$\a_f(P)=\d_f$, is it true that there exists a
divisor~$D\in\Div(X)_\RR$ such that $\hhatlower_{f,D}^\circ(P)>0$?
\end{question}

Using Proposition~\ref{proposition:hfge0impafeqdf}, we see that
Theorem~\ref{theorem:mainthma} is an immediate consequence of the
following result.

\begin{theorem}
\label{theorem:mainthm}
Let~$K/\QQ$ be a number field, and let $f:\AA^2\to\AA^2$ be an affine
morphism defined over~$K$ whose extension to
$f:\PP^2\dashrightarrow\PP^2$ is dominant and satisfies~$\d_f>1$.
Assume that one of the following is true\textup{:}
\begin{parts}
\Part{(a)}
The map~$f$ is algebraically stable.
\Part{(b)}
$\deg(f)=2$.
\end{parts}
Then there is a finite extension~$K'$ of~$K$, a prime~$\gp$ of~$K'$
and a $\gp$-adic neighborhood $U\subset\PP^2(K'_\gp)$ such that
\[
  \hhatlower_f^\circ(P) >0 \quad\text{for all $P\in U\cap\AA^2(K')$}.
\]
\end{theorem}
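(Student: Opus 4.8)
The plan is to produce, in each case, a point $Q\in L:=\PP^2\setminus\AA^2$ that is periodic for $f$ (possibly only for an iterate of $f$, and if necessary on a blow-up of $\PP^2$), and then to show by $\gp$-adic analysis that every $K'$-point of $\AA^2$ lying $\gp$-adically close to $Q$ has orbit escaping to infinity at the exponential rate $\d_f^{\,n}$. Since a point of $\AA^2(K')$ lying $\gp$-adically near $L$ has height at least a fixed positive multiple of minus the logarithm of its $\gp$-adic distance to $L$ --- retain a single local term at $\gp$ in the product formula --- such escape forces $h\bigl(f^n(P)\bigr)\gg\d_f^{\,n}$ and hence $\hhatlower_f^\circ(P)>0$. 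Throughout one uses that $f$ restricts to a morphism of $\AA^2$, so forward orbits of points of $\AA^2$ are automatically well-defined and only orbits beginning on $L$ can meet the indeterminacy locus.

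\textbf{The $\gp$-adic mechanism.} I would isolate as a self-contained result (our Theorem~\ref{theorem:htwithperpt}) the following. Suppose $Q\in L$ is periodic of period $k$, its forward $f$-orbit avoids the indeterminacy locus, and $f^k$ is \emph{attracting} at $Q$: in local coordinates $(u,v)$ with $L=\{v=0\}$ one has $f^k(u,v)=\bigl(\phi(u,v),\,v^{e}\psi(u,v)\bigr)$ with $\psi(Q)\ne0$, with tangential multiplier $\lambda$ (the derivative of $u\mapsto\phi(u,0)$ at $Q$), and with transverse degree $e=\d_f^{\,k}$. Then, after enlarging $K$ to a number field $K'$ over which these data are defined and choosing $\gp$ among the all-but-finitely-many primes at which $f$ has good reduction, $Q$ and its orbit reduce off the indeterminacy locus, $\phi$ and $\psi$ have $\gp$-integral coefficients, $\psi(Q)$ is a unit, and $\lambda$ is $\gp$-integral, a small polydisc $U$ about $Q$ --- together with companion polydiscs about the other orbit points --- is forward-invariant, and $|v(f^{kn}(P))|_\gp=|v(P)|_\gp^{\,e^{\,n}}$ for $P\in U\cap\AA^2(K')$. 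Hence $h\bigl(f^{kn}(P)\bigr)\ge e^{\,n}\bigl(-\log|v(P)|_\gp\bigr)/[K':\QQ]$ with $-\log|v(P)|_\gp>0$, and dividing by $\d_f^{\,kn}=e^{\,n}$ and letting $n\to\infty$ --- the intermediate iterates $f^j\bigl(f^{kn}(P)\bigr)$, $0\le j<k$, lie in the companion polydiscs and have heights of comparable size --- gives $\hhatlower_f^\circ(P)>0$.

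\textbf{Producing $Q$.} For part~(a), algebraic stability gives $\deg(f^n)=\deg(f)^n$, hence $\d_f=\deg(f)$; moreover for any dominant affine endomorphism one has $f^*L=\deg(f)\cdot L$ as divisors on $\PP^2$ (the projective extension of $f$ has third homogeneous coordinate $Z^{\deg f}$, with nothing to cancel), so the transverse degree of $L$ under $f$ at every point of $L$ away from indeterminacy is exactly $\deg(f)=\d_f$; it thus remains to find a fixed point $Q\in L$ of $f$ off the indeterminacy locus. If $f|_L\colon\PP^1\dashrightarrow\PP^1$ is constant, contracting $L$ to a point $p$, then algebraic stability forbids the curve $L$ from being sent to an indeterminacy point, so $p$ is such a fixed point; if $f|_L$ is non-constant it has infinitely many periodic points, all but finitely many of whose finite orbits avoid the finite set of indeterminacy points on $L$. (The lone borderline configuration, $f|_L$ a parabolic M\"obius map whose fixed point is the indeterminacy point, is handled separately.) For part~(b) I would run through Guedj's classification~\cite{MR2097402} of dominant degree-$2$ self-maps of $\PP^2$ that restrict to morphisms of $\AA^2$: the algebraically stable types are disposed of as in part~(a), while the remaining types --- those for which $f|_L$ contracts $L$ to an indeterminacy point, so that $\d_f<2$ --- are given explicitly by the classification and each is recognized either as a monomial map, for which $\hhatlower_f^\circ>0$ on a suitable $\gp$-adic set follows from the toric estimates of~\cite{arxiv1111.5664} (cf.\ Theorem~\ref{theorem:afPmonomial}), or as a map for which an explicit iterate or blow-up $\pi\colon Y\to\PP^2$ carries an attracting periodic point on the boundary at infinity; the mechanism then applies on $Y$ with the big and nef divisor $\pi^*\Ocal_{\PP^2}(1)$, and the conclusion descends to $\PP^2$ because $h_{\pi^*\Ocal(1)}=h\circ\pi+O(1)$.

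\textbf{Main obstacle.} The $\gp$-adic mechanism is short once its hypotheses are in hand, so the substance of the proof is the production of $Q$, and the hard part is case~(b). Guedj's list is finite but has several families, and the members that are not algebraically stable are genuinely different --- birational or monomial, with $\d_f$ typically an irrational quadratic integer --- so each must be matched either to the monomial results of~\cite{arxiv1111.5664} (from which one has to extract the sharper bound $\hhatlower_f^\circ(P)>0$, not merely $\a_f(P)=\d_f$) or to an explicit model carrying an attracting periodic point at infinity; and in every case one must check that the transverse degree there equals $\d_f^{\,k}$ \emph{exactly} --- the inequality $\le$ is free from the fundamental inequality $\aupper_f\le\d_f$, but the matching lower bound is a local computation to be carried out case by case. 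Keeping track of indeterminacy along orbits, of the choice of good prime, and of the passage between $\PP^2$ and its blow-ups is the remaining bookkeeping burden.
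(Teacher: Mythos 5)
Your overall strategy --- produce a periodic point on the line at infinity, show a $\gp$-adic polydisc around it is forward-invariant with the coordinate transverse to infinity contracting at rate $\d_f$, and retain the single local term at $\gp$ in the height to get $h\bigl(f^n(P)\bigr)\gg\d_f^n$ --- is exactly the paper's strategy (its Theorem~\ref{theorem:htwithperpt}), and your handling of the contracting case and of the algebraically stable entries of Guedj's table matches the paper's.

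The gap is in your dichotomy for part (a). Writing $f=[HF_0+ZF_1,\,HG_0+ZG_1,\,Z^d]$ with $H=\gcd(F,G)$, the restriction of $f$ to the line at infinity is $\f=[F_0,G_0]$, and your claim that a non-constant $\f$ has infinitely many periodic points fails when $\deg F_0=1$: a M\"obius map has at most two fixed points and (outside the finite-order elliptic case) no other periodic points, and both fixed points may lie in $I_f$, i.e., be zeros of $H$. This is not a ``lone borderline parabolic configuration'': the paper's example $f=[aX^2Y,XY^2,Z^3]$ with $a$ not a root of unity is algebraically stable, has $\f=[aX,Y]$ loxodromic, and has \emph{no} periodic points at all on the line at infinity, so no iterate and no choice of $Q$ can feed your mechanism. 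The paper handles this entire sub-case by a separate argument with no periodic point: an explicit forward-invariant $\gp$-adic region $U=\bigl\{|x|_\gp>|y|_\gp>|z|_\gp \text{ and } |y|_\gp^d>|x|_\gp^{d-1}|z|_\gp\bigr\}$ on which one shows $|y_n|_\gp\ge|y_0|_\gp^{d^n}$ directly. You need to supply something of this kind. A similar remark applies to part (b): the non-stable quadratic types 1.1 and 3.2 (e.g.\ $(x,y)\mapsto(y+c_1,xy+c_2)$ with $\d_f=\frac{1+\sqrt5}{2}$) likewise have no periodic point at infinity on $\PP^2$; the paper disposes of them by a direct Fibonacci-exponent computation $|y_n|_\gp=R^{F_{n+2}}$ rather than the blow-up route you gesture at, and if you do take that route you must actually exhibit the model, the periodic point, and the equality of the transverse degree with $\d_f^k$ --- which you acknowledge as the hard part but do not carry out.
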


Before starting the proof of Theorem~\ref{theorem:mainthm}, we give
two lemmas. The first is the well-known characterization of
algebraic stabilty in the case of affine morphisms, and the second
describes how the dynamical degree and the canonical height change
when~$f$ is replaced by an iterate.

\begin{lemma}
\label{lemma:algstbiffVnnees}
Let $f:\AA^N\to\AA^N$ be an affine morphism, and by abuse of notation,
let $f:\PP^N\dashrightarrow\PP^N$ also denote the rational map obtained
by extending~$f$ to~$\PP^N$. Define inductively
a sequence of subvarieties of~$\PP^N$ by
\[
  V_0 = \PP^N\setminus\AA^N  \quad\text{and}\quad
  V_{n+1} = \overline{f(V_n\setminus I_f)},
\]
where the overline indicates to take the Zariski closure. Then
\[
  \text{$f$ is algebraically stable}
  \quad\Longleftrightarrow\quad
  \text{$V_n\ne\emptyset$ for all $n\ge0$.}
\]
\end{lemma}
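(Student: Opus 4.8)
The plan is to track how the hyperplane at infinity $H_\infty = \PP^N\setminus\AA^N$ and its images behave under $f$, and to relate the failure of algebraic stability to the collapse of some $f(V_n\setminus I_f)$ into the indeterminacy locus. Recall that for the rational map $f:\PP^N\dashrightarrow\PP^N$ extending an affine morphism of degree $d = \deg(f)$, we have $f^*H \equiv dH$ as divisor classes (where $H$ is a hyperplane), and the obstruction to $(f^*)^n = (f^n)^*$ on $\NS(\PP^N)_\RR = \RR\cdot H$ is precisely the dropping of degree: $(f^n)^*H \equiv \deg(f^n)\,H$, and $\deg(f^n) \le d^n$ with equality for all $n$ iff $f$ is algebraically stable. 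The classical criterion (see Sibony, or Bonifant--Fornaess) is that $f$ fails to be algebraically stable at step $n$ precisely when $f^{n-1}$ maps some component of the exceptional (degree-dropping) locus into $I_f$; for a map extending an affine morphism, the only exceptional locus is $H_\infty$ itself, so degree drops exactly when some forward image of $H_\infty$ lands inside $I_f \subset H_\infty$.

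First I would make precise the inductive definition: set $V_0 = H_\infty$ and $V_{n+1} = \overline{f(V_n\setminus I_f)}$, and observe by induction that each $V_n$ is a closed subvariety of $H_\infty$ (since $f$ carries $H_\infty\setminus I_f$ into $H_\infty$, as $f$ preserves $\AA^N$). The key computation is the pullback formula: I claim $(f^n)^*H \equiv d^n H$ if and only if the generic behaviour of $f$ along $V_0, V_1, \ldots, V_{n-1}$ does not collapse, and this collapse happens at stage $k$ exactly when $V_k = \emptyset$, i.e. when $f(V_{k-1}\setminus I_f)$ is empty, equivalently $V_{k-1}\subseteq I_f$. Concretely, writing $f = [F_0:\cdots:F_N]$ with the $F_i$ homogeneous of degree $d$ and no common factor, the composition $f^n$ is given by degree-$d^n$ forms $F_i^{(n)}$, and $\deg(f^n) < d^n$ precisely when these forms acquire a common factor; one shows that common factor is supported on (a forward image of) $H_\infty$, and tracking it is exactly tracking the chain $V_0\to V_1\to\cdots$. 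So $\deg(f^n) = d^n$ for all $n$ iff no $V_k$ becomes empty.

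For the forward direction ($f$ algebraically stable $\Rightarrow$ all $V_n\ne\emptyset$): if some $V_{k}=\emptyset$, then $V_{k-1}\subseteq I_f$, so $f^{k-1}(H_\infty)$ (suitably interpreted) lies in $I_f$, which forces a degree drop $\deg(f^{k+1})<d^{k+1}$, contradicting $(f^*)^{k+1}=(f^{k+1})^*$ via the displayed chain of pullback computations $\rho((f^*)^n) = d^n$ versus $\rho((f^n)^*)=\deg(f^n)$. For the converse, if all $V_n\ne\emptyset$, then at every stage the forms defining $f^n$ have no new common factor introduced from infinity, hence $\deg(f^n)=d^n$ for all $n$, which gives $(f^n)^*H \equiv d^n H \equiv (f^*)^n H$; since $\NS(\PP^N)_\RR$ is one-dimensional and generated by $H$, this is exactly algebraic stability. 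The main obstacle is making the bookkeeping of "common factors of the defining forms" rigorous and matching it cleanly with the geometric chain $V_n$ — i.e. proving that the only source of degree drop for an affine morphism is a forward image of $H_\infty$ falling into $I_f$, and that this is detected exactly by emptiness of some $V_n$. This is essentially Sibony's argument specialized to the affine setting, so I would cite \cite{sibony:panoramas} for the general mechanism and then supply the short specialization to affine morphisms, where the argument simplifies because $H_\infty$ is the unique exceptional divisor and is totally invariant-up-to-indeterminacy under $f$.
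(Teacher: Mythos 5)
Your proposal is correct and follows essentially the same route as the paper: both reduce algebraic stability to the multiplicativity $\deg(f^{n+1})=\deg(f^n)\deg(f)$ on the one-dimensional $\NS(\PP^2)_\RR$, invoke the standard (Sibony-type) criterion that this fails exactly when the image of the hyperplane at infinity under $f^n$ lands in $I_f$ (the only candidate hypersurface for an affine morphism), and identify that condition with $V_n\subseteq I_f$, i.e.\ $V_{n+1}=\emptyset$. The only bookkeeping you leave implicit --- that the inductively defined $V_n$ really is $\overline{f^n(V_0\setminus I_{f^n})}$ --- the paper dispatches in one line by tracking the generic point $\xi_n=f^n(\xi_0)$ of the irreducible variety $V_n$.
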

\begin{proof}
This is well-known, but for the convenience of the reader, we give
the short proof. If~$V_n\ne\emptyset$, we write~$\xi_n$ for the
generic point of~$V_n$. Then~$\xi_n=f^n(\xi_0)$.
\par
Directly from the definition, we have that the map~$f$ is
algebraically stable if and only if $\deg(f^{n+1})=\deg(f^n)\deg(f)$
for all $n\ge1$. For polynomial maps~$f,g:\AA^N\to\AA^N$, we have in
general that
\[
  \deg(f\circ g)=\deg(f)\deg(g)
  \quad\Longleftrightarrow\quad
  g\bigl(\PP^N\setminus(\AA^N\cup I_g)\bigr) \not\subseteq I_f.
\]
Applying this with~$g=f^n$, we have
\begin{align*}
  \deg(f^{n+1})=\deg(f^n)\deg(f)
  &\quad\Longleftrightarrow\quad
    f^n(V_0\setminus I_{f^n}) \not\subseteq I_f \\
  &\quad\Longleftrightarrow\quad
    f^n(\xi_0)\notin I_f \\
  &\quad\Longleftrightarrow\quad
    V_n\not\subseteq I_f~\text{for all $n\ge0$, since the} \\
  &\omit\hfill  Zariski closure of $\xi_n=f^n(\xi_0)$ is $V_n$\\
  &\quad\Longleftrightarrow\quad
    V_n\ne\emptyset~\text{for all $n\ge0$.} 
\end{align*}
This conclude the proof of Lemma~\ref{lemma:algstbiffVnnees}.
\end{proof}

\begin{lemma}
\label{lemma:degsfmvsf}
Let $f:X\dashrightarrow X$ be a dominant rational map, and let~\text{$m\ge1$}.
\begin{parts}
\Part{(a)}
The dynamical degree satisfies
\[
  \d_{f^m}=\d_f^m.
\]
\Part{(b)}
Assume that~$X$ and~$f$ are defined over~$\Qbar$ and that~$\d_f>1$,
and let~$P\in X(\Qbar)_f$. Then
\[
  \hhatlower_f^\circ(P)
   = \min_{0\le i<m}  \d_f^{-i}\hhatlower_{f^m}^\circ\bigl(f^i(P)\bigr).
\]
\end{parts}
\end{lemma}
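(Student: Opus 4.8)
The plan is to treat the two parts separately. Part~(a) is standard: $(f^m)^* = (f^*)^m$ on $\NS(X)_\RR$ is false in general, but the relevant fact is the limit-free characterization of the dynamical degree as the growth rate of the norms of $(f^n)^*$. From
\[
  \d_{f^m}
   = \lim_{n\to\infty} \rho\bigl((f^{mn})^*\bigr)^{1/n}
   = \lim_{n\to\infty} \Bigl(\rho\bigl((f^{mn})^*\bigr)^{1/(mn)}\Bigr)^m
   = \d_f^m,
\]
where the inner limit is a subsequence of the limit defining $\d_f$ and hence converges to the same value. So I would dispose of~(a) in one line.

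For part~(b), the approach is to unwind the definition of the weak lower canonical height directly. Fix a Weil height $h_D$ for $D$. By definition, $\hhatlower_{f^m}^\circ\bigl(f^i(P)\bigr) = \liminf_{k\to\infty} \d_f^{-mk} h_D\bigl(f^{mk+i}(P)\bigr)$, using part~(a) to write $\d_{f^m}^k = \d_f^{mk}$. Multiplying by $\d_f^{-i}$ gives $\d_f^{-i}\hhatlower_{f^m}^\circ\bigl(f^i(P)\bigr) = \liminf_{k\to\infty} \d_f^{-(mk+i)} h_D\bigl(f^{mk+i}(P)\bigr)$, i.e.\ the liminf of the sequence $a_n := \d_f^{-n} h_D\bigl(f^n(P)\bigr)$ restricted to the arithmetic progression $n \equiv i \pmod m$. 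Now $\hhatlower_f^\circ(P) = \liminf_{n\to\infty} a_n$, and the key elementary fact is that the liminf of a sequence over $\NN$ equals the minimum, over the $m$ residue classes $i = 0, 1, \ldots, m-1$, of the liminf of the subsequence along that residue class: the union of the $m$ progressions is a cofinite subset of $\NN$, so every tail of $(a_n)$ meets each progression, and $\liminf a_n = \min_i \liminf_{n \equiv i} a_n$ follows at once from the definition of liminf. Substituting gives exactly the claimed formula. One should remark that this identity is valid in $\RR \cup \{\infty\}$, so no finiteness hypothesis on the heights is needed.

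The one point requiring a little care — and the only place I expect any friction — is the bookkeeping relating $\hhatlower_{f^m}^\circ$ to $a_n$: one must correctly match the index $n = mk + i$, confirm that as $k$ ranges over $\NN$ the index $n$ ranges over exactly the residue class $i$ modulo $m$ (with $n \ge i$, which is harmless for a liminf), and use $\d_{f^m} = \d_f^m$ from part~(a) so that the normalizing factor $\d_{f^m}^{-k}\d_f^{-i} = \d_f^{-(mk+i)}$ agrees with the factor in $a_n$. Once that identification is in place the lemma is immediate. I would write part~(b) as: express $\d_f^{-i}\hhatlower_{f^m}^\circ(f^i(P))$ as the liminf of $(a_n)$ along the progression $n\equiv i\pmod m$; observe $\{0,\dots,m-1\}$-indexed progressions cover a cofinite set; conclude $\hhatlower_f^\circ(P) = \min_{0\le i<m}(\text{those liminfs})$.
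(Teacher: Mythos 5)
Your proposal is correct and follows essentially the same route as the paper: part (a) via the subsequence $\rho((f^{mn})^*)^{1/(mn)}$ of the convergent sequence defining $\d_f$, and part (b) by splitting the sequence $\d_f^{-n}h_D(f^n(P))$ into its $m$ residue classes modulo $m$ and using that the liminf over $\NN$ is the minimum of the liminfs along those classes. The paper writes the latter step as $\liminf_k\min_{0\le i<m}$ before exchanging the liminf and the min, but the content is identical to yours.
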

\begin{proof} 
(a) To ease notation, we write~$\rho(f^n)$ for the the magnitude of the
largest eigenvalue of~$(f^n)^*$ acting on~$\NS(X)_\RR$. Then by
definition~$\d_f$ is the limit of $\rho(f^n)^{1/n}$
as~$n\to\infty$. It is known that this limit exists, 
so any subsequence also converges to~$\d_f$. Hence
\[
  \d_f = \lim_{k\to\infty}\rho(f^{mk})^{1/mk}
   = \left(\lim_{k\to\infty}\rho\bigl((f^m)^k\bigr)^{1/k}\right)^{1/m}
   = \d_{f^m}^{1/m}.
\]
\par\noindent(b)\enspace
The lower canonical height is defined as a liminf, so we can't
restrict to a subsequence as in~(a).  Instead we apply the definition
of the canonical height to each of the
points~$P,f(P),\ldots,f^{m-1}(P)$. Thus
\begin{align*}
  \hhatlower_f^\circ(P)
  &= \liminf_{n\to\infty} \frac{h\bigl(f^{n}(P)\bigr)}{\d_f^{n}} \\*
  &= \liminf_{k\to\infty} \min_{0\le i<m} 
     \frac{h\bigl(f^{km+i}(P)\bigr)}{\d_f^{km+i}} \\
  &= \min_{0\le i<m} \liminf_{k\to\infty} 
     \frac{h\bigl((f^m)^k(f^iP)\bigr)}{\d_{f^m}^k\cdot\d_f^i} 
       \quad\text{from (a),}\\*
  &= \min_{0\le i<m}  \d_f^{-i}\hhatlower_{f^m}^\circ\bigl(f^i(P)\bigr),
\end{align*}
which completes the proof of Lemma~\ref{lemma:degsfmvsf}.
\end{proof}

 \section{Canonical heights, $p$-adic neighborhoods, and periodic points}
\label{section:htwithperpt}

The following result provides our primary tool for proving
Theorem~\ref{theorem:mainthm}.

\begin{theorem}
\label{theorem:htwithperpt}
Let~$K/\QQ$ be a number field, and let $f:\AA^N\to\AA^N$ be an affine
morphism defined over~$K$ whose extension 
$f:\PP^N\dashrightarrow\PP^N$ is dominant
and satisfies~$\d_f>1$. Suppose that there exists
an integer $m\ge1$ and a point~$Q_0\in\PP^N(K)$ lying on the
hyperplane at infinity such that~$f^m$ is defined at~$Q_0$ and such
that $f^m(Q_0)=Q_0$.  Then there is a prime~$\gp$ of~$K$ and a
$\gp$-adic neighborhood $U\subset\PP^N(K_\gp)$ of~$Q_0$ such that
\[
  \hhatlower_f^\circ(P) >0 \quad\text{for all $P\in U\cap\AA^N(K)$}.
\]
\end{theorem}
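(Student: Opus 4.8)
The plan is to work $\gp$-adically near the fixed point $Q_0$ on the hyperplane at infinity, using the local contraction/expansion behavior of $f$ to get a lower bound on heights along orbits. First I would choose suitable coordinates on $\PP^N$ so that $Q_0=[0:\cdots:0:1:\cdots]$, say with $Q_0$ the origin of an affine chart $\AA^N_{\text{loc}}$ centered at $Q_0$, in which the hyperplane at infinity $H_\infty=\PP^N\setminus\AA^N$ is a coordinate hyperplane through $Q_0$. Since $f^m$ is a morphism in a Zariski neighborhood of $Q_0$ (it is defined at $Q_0$) and fixes $Q_0$, for a suitable prime $\gp$ of good reduction (avoiding the finitely many bad primes coming from the coefficients of $f$, $f^m$, and the resolution data) the map $f^m$ induces a $\gp$-adic analytic self-map of a small polydisc $U$ around $Q_0$ with $f^m(U)\subseteq U$; I would shrink $U$ so that on $U$ the map $f^m$ is given by convergent power series and the coordinate function $z$ cutting out $H_\infty$ satisfies a bound of the shape $|z(f^m(x))|_\gp \le |z(x)|_\gp$ (using that $H_\infty$ is $f$-invariant, $z\circ f^m$ is divisible by $z$).

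Second, and this is the heart of the matter, I would relate the $\gp$-adic size $|z(f^{n}(P))|_\gp$ of the "distance to infinity" to the ample height $h\bigl(f^n(P)\bigr)$ of the corresponding affine point. The point is that if $P\in U\cap\AA^N(K)$, then $z(P)\ne0$, and a standard height-versus-nearest-point estimate gives $h\bigl(f^n(P)\bigr)\ge -\log|z(f^n(P))|_\gp - O(1)$, because a point close to $H_\infty$ at the place $\gp$ necessarily has large height (its affine coordinates have large $\gp$-adic absolute value, forcing the global height up by the product formula). To control $|z(f^n(P))|_\gp$ from below one uses the local dynamics: writing $n=km+i$, the iterate $(f^m)^k$ near $Q_0$ contracts toward $Q_0$ at most at a geometric rate, so one gets a lower bound $|z(f^{km+i}(P))|_\gp \ge c^{k}\,|z(f^i(P))|_\gp$ for some constant $c>0$ with $c\le 1$ depending only on $f^m$ and $U$ (the derivative/leading-term analysis of $f^m$ at $Q_0$; one may need to pass to a further iterate so that $f^m$ is, e.g., "attracting-nondegenerate" at $Q_0$, but in the worst case $c$ can be taken $<1$ just from boundedness of the analytic map on the compact polydisc $U$). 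Combining, $h\bigl(f^n(P)\bigr)\ge k\log(1/c) + \bigl(-\log|z(f^i(P))|_\gp\bigr) - O(1) = \frac{n}{m}\log(1/c) - O(1)$, which grows linearly in $n$.

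Third, I would feed this into the weak lower canonical height. Since $\d_f>1$, we have $\d_{f^m}=\d_f^m>1$ by Lemma~\ref{lemma:degsfmvsf}(a), and the linear-in-$n$ lower bound on $h\bigl(f^n(P)\bigr)$, while useful, is not yet the $\d_f^n$ growth we want — so here I need the genuinely multiplicative feature of the dynamics near $Q_0$: the coordinate hyperplane $H_\infty$ is $f$-invariant and $z\circ f^m$ vanishes on $H_\infty$ to order equal to the local degree, so in fact $|z(f^m(x))|_\gp \le |z(x)|_\gp^{\d_{f^m}}$ up to a bounded factor on $U$ (this is the $\gp$-adic analogue of the fact that $h(f(Q))$ for $Q$ near the superattracting set $H_\infty$ grows like $\d_f\cdot$(depth)). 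Iterating gives $-\log|z(f^{km}(P))|_\gp \ge \d_{f^m}^{k}\bigl(-\log|z(P)|_\gp\bigr) - O(1)$, hence $h\bigl(f^{km}(P)\bigr)\ge \d_f^{km}\cdot c_0 - O(1)$ with $c_0 = -\log|z(P)|_\gp>0$ (positive because $P\in\AA^N$, so $z(P)\ne0$, and after possibly shrinking $U$ we can even make $c_0$ bounded below uniformly on $U\cap\AA^N(K)$). Therefore $\hhatlower_{f^m}^\circ\bigl(f^i(P)\bigr) = \liminf_k \d_{f^m}^{-k} h\bigl((f^m)^k f^i(P)\bigr) \ge c_0' >0$ for each $i$, and by Lemma~\ref{lemma:degsfmvsf}(b), $\hhatlower_f^\circ(P) = \min_{0\le i<m}\d_f^{-i}\hhatlower_{f^m}^\circ(f^i(P)) >0$, as desired.

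The main obstacle is the second/third step: making precise the claim that proximity to the invariant hyperplane at infinity translates, with the correct exponent $\d_{f^m}$, into a quantitative lower bound on the global height — i.e., proving the estimate $|z(f^m(x))|_\gp\lesssim |z(x)|_\gp^{\d_{f^m}}$ on a $\gp$-adic polydisc and checking it is genuinely the multiplicative (not merely geometric) growth. This requires carefully choosing the prime $\gp$ (good reduction for $f^m$ near $Q_0$, so that the reduction of $f^m$ still fixes $\bar Q_0$ and still has $H_\infty$ as an invariant hyperplane with the full local degree) and choosing the polydisc $U$ small enough that all the power-series estimates hold with explicit constants; the product-formula bookkeeping relating $-\log|z(f^n P)|_\gp$ to $h(f^n P)$ is then routine. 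I would also need to handle the degenerate possibility that the local degree of $z\circ f^m$ at $Q_0$ is smaller than $\d_{f^m}$ — but in the situations of Theorem~\ref{theorem:mainthm} (algebraically stable, or $\deg f=2$ via Guedj's classification) one arranges, after replacing $f$ by an iterate, that the fixed point $Q_0$ sees the full dynamical degree, so this case does not arise.
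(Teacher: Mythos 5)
Your strategy is essentially the paper's: reduce to the case $m=1$ via Lemma~\ref{lemma:degsfmvsf}(b), construct an $f$-invariant $\gp$-adic neighborhood of $Q_0$, prove a multiplicative estimate on the $\gp$-adic distance to the hyperplane at infinity, and convert it into the lower bound $h\bigl(f^n(P)\bigr)\ge d^n\cdot(\text{const}>0)$. The two points you flag as the main obstacles in fact dissolve under the hypotheses, and more simply than you anticipate. First, because $f$ is an \emph{affine morphism} of degree $d$, its homogeneous extension has last coordinate exactly $W^d$; after moving $Q_0$ to $[1,0,\ldots,0]$ one writes $f=[aX_1^d+G_1,G_2,\ldots,G_N,W^d]$ with the $G_i$ vanishing at $Q_0$, picks $\gp$ so that $a\in R_\gp^*$ and the $G_i$ are $\gp$-integral, and takes $U=\{[x_1,\ldots,x_N,w]: x_1\in R_\gp^*,\ x_2,\ldots,x_N,w\in M_\gp\}$; then the first homogeneous coordinate of $f^n(P)$ remains a $\gp$-unit (so no cancellation occurs) and the last is exactly $\b^{d^n}$, which gives your estimate $|z(f(P))|_\gp\le|z(P)|_\gp^{\d_f}$ as an exact equality with exponent $d$ and no error term or power-series analysis needed. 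Second, the degenerate possibility you worry about (the local vanishing order being smaller than $\d_{f^m}$) cannot occur: the existence of the periodic point $Q_0$ at infinity forces $f^m$ to be algebraically stable (Remark~\ref{remark:fixedptalgst}, via Lemma~\ref{lemma:algstbiffVnnees}), hence $\d_{f^m}=\deg(f^m)$, so the exponent appearing on $W$ is automatically the dynamical degree; you should not defer this point to the applications in Theorem~\ref{theorem:mainthm}, since the present theorem must be proved as stated, but the resolution is internal to its hypotheses.
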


\begin{remark}
The study of algebraic points on varieties via $\gp$-adic
neighborhoods that are mapped into themselves by algebraic maps, as in
Theorem~\ref{theorem:htwithperpt}, has a long history. One might start
by citing the Skolem--Lech--Mahler theorem on linear recurrences and
Chaubauty's result on rational points on curves~\cite{MR0004484}, as
well as more recent results in arithmetic dynamics, including for
example papers on the dynamical Mordell--Lang
conjecture~\cite{arxiv:0808.3266,arXiv:0712.2344,arxiv0805.1560} and
applications to potential density~\cite{MR2862064}.
\end{remark}

\begin{example}
\label{example:fbadf2good}
We note that it is possible for~$f^m$ to be defined at~$Q_0$ even if
some lower iterate of~$f$ is not defined at~$Q_0$. For example,
the extension of the map
\[
  f:\AA^2\longrightarrow\AA^2,\quad
  f(x,y) = (y^2,x)
\]
to~$\PP^2$ has the form
\[
  f\bigl([X,Y,Z]\bigr)=[Y^2,XZ,Z^2].
\]
Its second iterate is
\[
  f^2\bigl([X,Y,Z]\bigr)=[X^2Z^2,Y^2Z^2,Z^4]=[X^2,Y^2,Z^2],
\]
so~$[1,0,0]\in I_f$, but $f^2\bigl([1,0,0]\bigr)=[1,0,0]$. For this
example, the map~$f^2$ is a morphism on~$\PP^2$, so
$\d_{f^2}=\deg(f^2)=2$, and Lemma~\ref{lemma:degsfmvsf}(a) tells
us that~$\d_f=\d_{f^2}^{1/2}=\sqrt2$.
\end{example}

\begin{remark}
\label{remark:fixedptalgst}
Under the assumptions of Theorem~\ref{theorem:htwithperpt}, the
map~$f^m$ is algebraically stable. To see this, we inductively define
subvarieties
\[
  V_0 = \PP^N\setminus\AA^N  \quad\text{and}\quad
  V_{n+1} = \overline{f^m(V_n\setminus I_{f^m})}.
\]
The assumption that~$f^m(Q_0)=Q_0$ implies that~$Q_0\in V_n$ for
all~$n$, and then Lemma~\ref{lemma:algstbiffVnnees} tells us
that~$f^m$ is algebraically stable. 
\end{remark}

\begin{example}
It is easy to construct examples of affine morphisms that do not
have any periodic points. For example, consider the map
\[
  f:\AA^2\longrightarrow\AA^2,
  \quad f(x,y)=(xy,y+1).
\]
Then
\[
  f^n(x,y) = \bigl(xy(y+1)\cdots(y+n-1),y+n\bigr)
\]
so~$f$ has no periodic points in~$\AA^2$. The extension of~$f$
to~$\PP^2$ satisfies
\[
  f^n\bigl([X,Y,Z]\bigr) =
  \bigl[XY(Y+Z)\cdots(Y+(n-1)Z),YZ^n+nZ^{n+1},Z^{n+1}\bigr],
\]
so the only possible periodic point of~$f$ in~$\PP^2\setminus\AA^2$ is
the point~$[1,0,0]$. But~$f$ is not defined at~$[1,0,0]$, and
hence~$f$ has no periodic points in~$\PP^2$.  Of course, the map~$f$
is not algebraically stable, since $\deg(f^n)=n+1$, so we already know
from Remark~\ref{remark:fixedptalgst} that
Theorem~\ref{theorem:htwithperpt} does not apply to~$f$.
\end{example}

\begin{example}
Let~$a\in\Qbar^*$ be a number that is not a root of unity.
The map
\[
  f:\PP^2\longrightarrow\PP^2,\quad
  f\bigl([X,Y,Z]\bigr) = [aX^2Y,XY^2,Z^3],
\]
which extends the affine morphism
\[
  \AA^2\longrightarrow\AA^2,\quad
  (x,y)\longmapsto(ax^2y,xy^2),
\]
gives an example of an algebraically stable affine morphism
having no periodic points on the line~$\{Z=0\}$ at infinity.  Indeed,
if we let
\[
  e(n)=\frac{1}{2}(3^n+1)
  \quad\text{and}\quad
  u(n)=\frac{1}{4}(3^n-1+2n),
\]
then one easily checks that
\[
  f^n\bigl([X,Y,Z]\bigr) 
  = \bigl[a^{u(n)}X^{e(n)}Y^{e(n)-1},a^{u(n)-n}X^{e(n)-1}Y^{e(n)},Z^{3^n}\bigr],
\]
so $\deg(f^n)=3^n=\deg(f)^n$, which shows that~$f$ is algebraically
stable. The indeterminacy locus of~$f$ is
\[
  I_f = \bigl\{[1,0,0],[0,1,0]\bigr\}.
\]
Suppose that~$[\a,\b,0]\notin I_f$ is a periodic point lying on the line
at infinity. Then~$\a\b\ne0$, so using the formula for~$f^n$, we must have
\[
  \frac{a^{u(n)}\a^{e(n)}\b^{e(n)-1}}{a^{u(n)-n}\a^{e(n)-1}\b^{e(n)}}
  = \frac{\a}{\b}.
\]
This implies that $a^{n}=1$, contradicting our choice of~$a$ as a
non-root of unity. Hence~$f$ has no periodic points on the line at
infinity. Of course, it does have periodic points in~$\AA^2$. More precisely,
it has exactly one periodic point, namely the fixed point~$(0,0)$.
\end{example}

\begin{proof}[Proof of Theorem~$\ref{theorem:htwithperpt}$]
Lemma~\ref{lemma:degsfmvsf}(b) says that
\[
  \hhatlower_f^\circ(P)
   = \min_{0\le i<m}  \d_f^{-i}\hhatlower_{f^m}^\circ\bigl(f^i(P)\bigr).
\]
So if we can prove the theorem for~$f^m$, then we can apply the
theorem to the map~$f^m$ and each of the
points~$P,f(P),f^2(P),\ldots,f^{m-1}(P)$ to deduce that the theorem is 
true for the map~$f$ and the point~$P$.  We may thus
replace~$f$ with~$f^m$, which reduces us
to the case that~$Q_0$ is a fixed point of~$f$. Then, as noted in
Remark~\ref{remark:fixedptalgst}, the map~$f$ is automatically
algebraically stable, i.e., $\d_f=d=\deg(f)$.  
\par
We let~$X_1,\ldots,X_N,W$ be projective coordinates on~$\PP^N$, with
the hyperplane at infinity being the set~$\{W=0\}$.  Making a change
of coordinates, we move~$Q_0$ to the point
\[
  Q_0 = [1,0,0,\ldots,0] \in \PP^N.
\]
Then the assumptions that~$f:\AA^N\to\AA^N$ and~$f(Q_0)=Q_0$ imply
that~$f$ can be written in the form
\begin{equation}
  \label{eqn:faX1dG1}
  f = [aX_1^d + G_1, G_2, \ldots, G_N, W^d]
\end{equation}
with~$a\in K^*$ and $G_1,\ldots,G_N\in K[X_1,\ldots,X_N,W]$ 
homogeneous polynomials of degree~$d$ that vanish at~$Q_0$,
i.e.,~$G_1,\ldots,G_N$ are in the ideal generated
by~$X_2,\ldots,X_N,W$.
\par
For any prime~$\gp$ of~$K$, we let
\[
  R_\gp=\{x\in K_\gp : |x|_\gp\le1\}
  \quad\text{and}\quad
  M_\gp=\{x\in K_\gp : |x|_\gp<1\}
\]
denote, respectively, the ring of integers of~$K_\gp$ and the maximal
ideal of~$R_\gp$.  We choose a prime~$\gp$ of~$K$ such that
\[
  a\in R_\gp^*
  \quad\text{and}\quad
  G_1,\ldots,G_N\in R_\gp[X,Y],
\]
and we consider the $\gp$-adic neighborhood of~$Q_0$ defined by
\[
  U = \bigl\{[x_1,x_2,\ldots,x_N,w] : 
     \text{$x_1\in R_\gp^*$ and  $x_2,\ldots,x_N,w\in M_\gp$} \bigr\}.
\]
Using~\eqref{eqn:faX1dG1} and the facts that~$a\in R_\gp^*$
and~$G_1,\ldots,G_N$ are in the ideal generated by~$X_2,\ldots,X_N$,
it is clear that~$f(U)\subset U$. More precisely, if we choose
a point
\[
  P = [\a_1,\ldots,\a_N,\b] \in U
  \quad\text{with $\a_1\in R_\gp^*$ and $\a_2,\ldots,\a_N,\b\in M_\gp$,}
\]
then we can write~$f^n(P)$ as
\begin{align*}
  f^n(P) = [\a_1^{(n)},\ldots,&\a_N^{(n)},\b^{d^n}] \in U\\*
   &\text{with $\a_1^{(n)}\in R_\gp^*$
         and $\a_2^{(n)},\ldots,\a_N^{(n)},\b\in M_\gp$.}
\end{align*}
The key point to note here is that we cannot cancel any factors
of~$\gp$ from these homogeneous coordinates of~$f^n(P)$, because the
first coordinate is a unit.
\par
We now compute
\begin{align*}
  h\bigl(f^n(P)\bigr)
  &= h\bigl([\a_1^{(n)},\ldots,\a_N^{(n)},\b^{d^n}]\bigr) \\*
  &= h\left(\left[
      \frac{\a_1^{(n)}}{\b^{d^n}},\ldots,
               \frac{\a_N^{(n)}}{\b^{d^n}},1\right]\right) \\
  &= \sum_{v\in M_K} \log\max\left\{
      \left\|\frac{\a_1^{(n)}}{\b^{d^n}}\right\|_v,\ldots,
      \left\|\frac{\a_N^{(n)}}{\b^{d^n}}\right\|_v,1\right\} \\
  &\ge \log\max\left\{
      \left\|\frac{\a_1^{(n)}}{\b^{d^n}}\right\|_\gp,\ldots,
      \left\|\frac{\a_N^{(n)}}{\b^{d^n}}\right\|_\gp,1\right\} \\
   &= \log \|\b\|_\gp^{-d^n} 
    \quad\text{since $\|\a_1^{(n)}\|_\gp=1$ and $\|\a_i^{(n)}\|_\gp\le1$
            for all $i$,}\\*
   &= d^n \log \|\b\|_\gp^{-1}.
\end{align*}
Note that this inequality holds for all~$n\ge0$.
We are given that~$\b\in M_\gp$, so $\log \|\b\|_\gp^{-1}>0$. Hence
\[
  \hhatlower_f^\circ(P)
  = \liminf_{n\to\infty} \frac{h\bigl(f^n(P)\bigr)}{\d_f^n}
  = \liminf_{n\to\infty} \frac{h\bigl(f^n(P)\bigr)}{d^n}
  \ge \log \|\b\|_\gp^{-1}
  > 0.
\]
This concludes the proof of Theorem~\ref{theorem:htwithperpt}.
\end{proof}

 \section{Algebraically stable affine maps on $\AA^2$}
\label{section:algstbleA2}

In this section we use Theorem~\ref{theorem:htwithperpt} to prove
Theorem~\ref{theorem:mainthm}(a). We use the assumed algebraically
stability of~$f$ and a case-by-case analysis to find the required
periodic point lying on the line at infinity.

\begin{proof}[Proof of Theorem~$\ref{theorem:mainthm}$\textup{(a)}]
If~$\d_f=1$, then~\eqref{eqn:fundineq} and the fact
that~$\alower_f(P)$ is always greater than or equal to~$1$ implies
the $\a_f(P)=1$. We may thus assume that~$\d_f>1$.  Let
$d=\deg(f)$. We write $f$ in homogeneous form as
\[
  f(X,Y,Z)=\bigl[ F(X,Y)+ZF_1(X,Y,Z), G(X,Y)+ZG_1(X,Y,Z), Z^d \bigr].
\]
Since~$f$ has degree~$d$, we see that at least one of~$F$ and~$G$ is
nonzero. Changing coordinates, we may assume that $F\ne0$.  
\par
Let $H=\gcd(F,G)\in K[X,Y]$, and write
\[
  F=HF_0\quad\text{and}\quad G=HG_0
  \quad\text{with}\quad \gcd(F_0,G_0)=1,
\]
so the map~$f$ has the form
\begin{multline*}
  f(X,Y,Z) = \bigl[ H(X,Y)F_0(X,Y)+ZF_1(X,Y,Z), \\*
      H(X,Y)G_0(X,Y)+ZG_1(X,Y,Z), Z^d \bigr].
\end{multline*}
We consider three subcases, depending on the degree of~$F_0$.

\CaseAlt{1}{$\deg(F_0)\ge2$} 
Since
\[
  \deg G_0 = \deg G - \deg H = d - \deg H = \deg F - \deg H = \deg F_0,
\]
we have a well-defined map
\[
  \f  = [F_0,G_0] : \PP^1\longrightarrow \PP^1
\]
of degree at least~$2$. Such a
map~$\f$ has infinitely many distinct periodic orbits
in~$\PP^1(\Qbar)$~\cite{beardon:gtm}, while there are only
finitely many points in~$\PP^1$ satisfying $H(X,Y)=0$. (If~$H$ is
constant, there will be no such points.) So after replacing~$K$ by a
finite extension, we can find a $\f$-periodic
point~$Q_0=[x_0,y_0]\in\PP^1(K)$, say of period~$m$, such that
\begin{equation}
  \label{eqn:HfiQ0ne0}
  H\bigl(\f^i(Q_0)\bigr)\ne0 \quad\text{for all $0\le i<m$.}
\end{equation}
By abuse of notation, we also write~$Q_0=[x_0,y_0,0]\in\PP^2$, using
the natural identification of~$\PP^1$ with the line $Z=0$ in~$\PP^2$.
We note that~\eqref{eqn:HfiQ0ne0} implies that
\begin{align*}
  f(Q_0) &= f\bigl([x_0,y_0,0]\bigr)  \\
  &= \bigl[H(x_0,y_0)F_0(x_0,y_0),H(x_0,y_0)G_0(x_0,y_0),0\bigr] \\
  &= \bigl[F_0(x_0,y_0),G_0(x_0,y_0),0\bigr]
\end{align*}
is well-defined, since~$F_0$ and~$G_0$ have no nontrivial common
roots, and more generally~\eqref{eqn:HfiQ0ne0}
ensures that~$f^i(Q_0)$ is well-defined for all~$i\ge0$. With the
identification $\PP^1=\{Z=0\}\subset\PP^2$, we have
\[
  f^i(Q_0) = \f^i(Q_0)\quad\text{for all $i\ge0$,}
\]
and hence the point~$Q_0\in\PP^2(K)$ is an $m$-periodic point
for~$f$. It follows from Theorem~\ref{theorem:htwithperpt} that there
is a prime~$\gp$ and a $\gp$-adic neighborhood $Q_0\in
U\subset\PP^2(K_\gp)$ such that~$\hhatlower_f^\circ(P)>0$ for all $P\in
U\cap\AA^2(K)$.

\CaseAlt{2}{$\deg(F_0)=0$}
Again by degree considerations we see that~$G_0$ is also constant,
so~$f$ has the form
\[
  f = \bigl[ \a H(X,Y)+ZF_1(X,Y,Z), 
      \b H(X,Y)+ZG_1(X,Y,Z), Z^d \bigr]
\]
for some $[\a,\b]\in\PP^1$. If~$\b\ne0$, we conjugate~$f$ by the 
map
\[
  \psi(X,Y,Z)=[Y,\b X-\a Y,Z],
\]
to obtain
\[
  f^\psi=\psi\circ f\circ\psi^{-1}
  =\bigl[\b H\circ\psi^{-1}+ZG_1\circ\psi^{-1},
   Z(\b F_1\circ\psi^{-1}-\a G_1\circ\psi^{-1}),Z^d].
\]
So in all cases, after possibly changing corodinates and relabeling, we
are reduced to studying maps of the form
\[
  f(X,Y,Z) =
  \bigl[  H(X,Y)+ZF_1(X,Y,Z), ZG_1(X,Y,Z), Z^d \bigr].
\]

\CaseAlt{2.a}{$H(1,0)=0$}
Then~$f$ is not defined at~$[1,0,0]$, i.e., $[1,0,0]\in I_f$, and
hence
\[
  \overline{f\bigl(\{Z=0\}\setminus I_f\bigr)}
   = \bigl\{[1,0,0]\bigr\} \subset I_f.
\]
It follows from Lemma~\ref{lemma:algstbiffVnnees} that~$f$ is not
algebraically stable.  In fact, already at the second iterate we have
$\deg(f^2)<\deg(f)^2$.

\CaseAlt{2.b}{$H(1,0)\ne0$}
Then~$f$ is defined at~$[1,0,0]$, and~$[1,0,0]$ is a fixed point
of~$f$, so we can take~$Q_0=[1,0,0]$ and~$m=1$ in
Theorem~\ref{theorem:htwithperpt} to obtain the desired conclusion.

\CaseAlt{3}{$\deg(F_0)=1$} 
In this case the map
\[
  \f  = [F_0,G_0] : \PP^1\longrightarrow \PP^1
\]
has degree~$1$. Thus~$\f$ is a linear fractional transformation, so
after a change of coordinates (of~$\PP^2$ that maps the line at
infinity to itself), the map~$\f$ may be put into one of the following
two forms:
\[
  \f(X,Y)=[aX,Y] \quad\text{or}\quad \f(X,Y)=[X+bY,Y]
  \quad\text{with $a,b\in\Qbar^*$.}
\]
We note that for any~$\g\in\PGL_3(\Qbar)$, we have
\[
  \d_{\g\circ f\circ\g^{-1}}=\d_f
  \quad\text{and}\quad
  \aupper_{\g\circ f\circ\g^{-1}}(P)=\aupper_f(\g P),
\] 
so it is permissible to make this change of coordinates.  We 
write
\[
  H(X,Y) = c_kX^kY^{d-1-k} + \cdots
  \quad\text{with $k\ge0$ and $c_k\ne0$.}
\]
\par
We note that unless~$a=1$, the map~$\f$ has either one or two
fixed points, and no other periodic points. If one of those
fixed points is not in~$I_f$, then we can apply
Theorem~\ref{theorem:htwithperpt} to conclude the proof. However, if
the fixed points are in~$I_f$, i.e.,  if~$H(X,Y)$ vanishes at the
fixed points, then~$f$ has no periodic points on the line at
infinty, so we cannot use Theorem~\ref{theorem:htwithperpt}. We give
an alternative argument that works in all cases.
\par
Let~$K$ be a number field containing the coefficients of the
polynomials defining~$f$, and let~$\gp$ be a nonarchimedean place such
that the nonzero coefficients of~$H,F_0,F_1,G_0,G_1$ have $\gp$-adic
absolute value~$1$. We consider the $\gp$-adic open set
\[
  U = \bigl\{ P=[x,y,z]\in\PP^2(K_\gp) : 
    \text{$|x|_\gp>|y|_\gp>|z|_\gp$ and 
    $|y|_\gp^d>|x|_\gp^{d-1}|z|_\gp$} \bigr\}.
\]
We note that a point $[\a,\b,1]\in\AA^2(K_\gp)$ is in~$U$ if and only if
\begin{equation}
  \label{eqn:ptisinUiff}
  |\a|_\gp>|\b|_\gp>1 \quad\text{and}\quad |\b|_\gp^d > |\a|_\gp^{d-1}.
\end{equation}
We are going to prove that
\[
  \aupper_f(P)=\d_f\quad\text{for all $P\in U\cap\AA^2(K)$.}
\]
\par
Let $P\in U\cap\AA^2(K_\gp)$ and write~$P$ and~$f(P)$ as
\[
  P=[\a,\b,1]\quad\text{and}\quad f(P)=[\a',\b',1].
\]
We claim that
\begin{equation}
  \label{eqn:claimfPU}
  f(P)\in U\quad\text{and}\quad |\b'|_\gp \ge |\b|_\gp^d.
\end{equation}
\par
The assumption that~$P\in U$ tells us that we can write
\begin{equation}
  \label{eqn:RbRSaR1S1RSd1}
   |\b|_\gp=R\quad\text{and}\quad |\a|_\gp=RS
  \quad\text{with $R>1$, $S>1$, and $R>S^{d-1}$.}
\end{equation}
We now verify~\eqref{eqn:claimfPU} for the two cases
for~$\f$.

\CaseAlt{3.a}{$\deg(F_0)\ge1$ and $\f=[aX,Y]$}
We estimate the size of the two terms in the first coordinate
of~$f(P)$ as
\[
  \bigl|F_0(\a,\b)H(\a,\b)\bigr|_\gp
  = |a\a|_\gp |c_k\a^k\b^{d-1-k}|_\gp 
  = |\a|_\gp^{k+1}|\b|_\gp^{d-1-k}
  = R^dS^{k+1} 
\]
and
\[
  \bigl|F_1(\a,\b,1)\bigr|_\gp
  \le \max_{0\le i\le d-1}  |\a|_\gp^i |\b|_\gp^{d-1-i}
  = \max_{0\le i\le d-1}  R^{d-1}S^i
  = (RS)^{d-1}.
\]
We know from~\eqref{eqn:RbRSaR1S1RSd1} that $R^d>(RS)^{d-1}$, so the 
ultrametric inequality gives
\begin{equation}
  \label{eqn:aprime}
  |\a'|_\gp
  = \bigl|F_0(\a,\b)H(\a,\b)+F_1(\a,\b,1)\bigr|_\gp
  = \bigl|F_0(\a,\b)H(\a,\b)\bigr|_\gp
  = R^dS^{k+1}.
\end{equation}
\par
Similarly, the second coordinate of~$f(P)$ has the two terms
\[
  \bigl|G_0(\a,\b)H(\a,\b)\bigr|_\gp
  = |\b|_\gp |c_k\a^k\b^{d-1-k}|_\gp 
  = |\a|_\gp^{k}|\b|_\gp^{d-k}
  = R^dS^k 
  \ge R^d
\]
and
\[
  \bigl|G_1(\a,\b,1)\bigr|_\gp
  \le \max_{0\le i\le d-1}  |\a|_\gp^i |\b|_\gp^{d-1-i}
  = \max_{0\le i\le d-1}  R^{d-1}S^i
  = (RS)^{d-1}.
\]
Again using ~$R^d>(RS)^{d-1}$ from~\eqref{eqn:RbRSaR1S1RSd1}, we have
\begin{equation}
  \label{eqn:bprime}
  |\b'|_\gp
  = \bigl|G_0(\a,\b)H(\a,\b)+G_1(\a,\b,1)\bigr|_\gp
  = \bigl|G_0(\a,\b)H(\a,\b)\bigr|_\gp
  = R^dS^k.
\end{equation}
\par
Using the formulas $|\a'|_\gp=R^dS^{k+1}$ and $|\b'|_\gp=R^dS^k$
from~\eqref{eqn:aprime} and~\eqref{eqn:bprime}, it is now easy
to verify the claims in~\eqref{eqn:claimfPU}. First, we check
that~$f(P)\in U$. We have
\[
  |\b'|_\gp = R^dS^k \ge R^d > 1
  \quad\text{and}\quad
  \frac{|\a'|_\gp}{|\b'|_\gp} = S > 1,
\]
and further
\[
  \frac{|\b'|_\gp^d}{|\a'|_\gp^{d-1}}
  = \frac{R^d}{S^{d-k-1}}
  \ge \frac{R^d}{S^{d-1}}
  > R^{d-1}
  > 1.
\]
(We have used the inequality $R>S^{d-1}$
from~\eqref{eqn:RbRSaR1S1RSd1}.) This shows that $f(P)$
satisfies~\eqref{eqn:ptisinUiff}, so~$f(P)\in U$. Finally, we have
\[
  |\b'|_\gp = R^dS^k \ge R^d = |\b|_\gp^d,
\]
which completes the proof of~\eqref{eqn:claimfPU}
in Case~3a.

\CaseAlt{3.b}{$\deg(F_0)\ge1$ and $\f=[X+b,Y]$}
The proof is similar, so we just quickly sketch. We have
\begin{align*}
  \bigl|F_0(\a,\b)H(\a,\b)\bigr|_\gp
  &= |\a+b|_\gp |c_k\a^k\b^{d-1-k}|_\gp 
  = |\a|_\gp^{k+1}|\b|_\gp^{d-1-k}
  = R^dS^{k+1}, \hidewidth \\
 \bigl|F_1(\a,\b,1)\bigr|_\gp
 & \le \max_{0\le i\le d-1}  |\a|_\gp^i |\b|_\gp^{d-1-i}
  = \max_{0\le i\le d-1}  R^{d-1}S^i
  = (RS)^{d-1}, \\
 \bigl|G_0(\a,\b)H(\a,\b)\bigr|_\gp
  &= |\b|_\gp |c_k\a^k\b^{d-1-k}|_\gp 
  = |\a|_\gp^{k}|\b|_\gp^{d-k}
  = R^dS^k 
  \ge R^d, \\
 \bigl|G_1(\a,\b,1)\bigr|_\gp
 & \le \max_{0\le i\le d-1}  |\a|_\gp^i |\b|_\gp^{d-1-i}
  = \max_{0\le i\le d-1}  R^{d-1}S^i
  = (RS)^{d-1}.
\end{align*}
These are the same estimates that we proved in Case~3a, so
the rest of the proof of Case~3a carries over verbatim. 
\par
We now resume the proof of Case~3.  We let~$P=[\a_0,\b_0,1]\in
U\cap\AA^2(K)$, and for~$n\ge0$ we write
\[
  f^n(P) = [\a_n,\b_n,1].
\]
Using~\eqref{eqn:claimfPU}, we see by induction that for
all~$n\ge0$ we have
\[
  f^n(P)\in U
  \quad\text{and}\quad
  |\b_n|_\gp = |\b_0|_\gp^{d^n}.
\]
Now the usual calculation gives
\begin{align*}
  h\bigl(f^n(P)\bigr)
  &= \frac{1}{[K:\QQ]} \sum_{v\in M_K} \log\max\bigl\{|\a_n|_v,|\b_n|_v,1\bigr\} \\
  &\ge \frac{1}{[K:\QQ]}  \log\max\bigl\{|\a_n|_\gp,|\b_n|_\gp,1\bigr\} \\
  &\ge \frac{1}{[K:\QQ]}  \log|\b_0|_\gp^{d^n} \\
  &= d^n \frac{\log|\b_0|_\gp}{[K:\QQ]} .
\end{align*}
Hence
\[
  \aupper_f(P) = \limsup \hplus_X\bigl(f^n(P)\bigr)^{1/n} \ge d.
\]
On the other hand, from~\eqref{eqn:fundineq} we have
$\aupper_f(P)\le \d_f\le d$, which completes the proof in Case~3 that
$\aupper_f(P)=\d_f$ for all $P\in U\cap\AA^2(K)$. (We remark that in
Case~3 it is easy to check that~$f$ is algebraically stable, so 
$\d_f=d$; but in any case, one always has the inequality
$\d_f\le d$, which is all that we need here.)
\par
Cases~1,~2, and~3 cover all of the possibilities for the map~$\f$,
which completes the proof of Theorem~\ref{theorem:mainthm}(a).
\end{proof}

\section{Degree $2$ affine maps on $\AA^2$}
\label{section:deg2onA2}

In this section we give the proof of Theorem~\ref{theorem:mainthm}(b),
which deals with degree~$2$ affine morphism of~$\AA^2$.  The proof is
a case-by-case analysis, using the classification of dominant
quadratic polynomial maps of $\AA^2(\CC)$ due to Guedj.  We note
that Guedj's proof works over any algebraically closed field of
characteristic~$0$, so in particular it is valid over~$\Qbar$. We also
note that it suffices to prove Theorem~\ref{theorem:mainthm}(b) for
maps that are not algebraically stable, since algebraically stable
maps of~$\AA^d$ are covered by Theorem~\ref{theorem:mainthm}(a).  It
is worth noting that some non-algebraically stable maps actually have
no periodic points on the line at infinity, so we cannot directly
apply Theorem~\ref{theorem:htwithperpt} to these maps. However, in
each case we are able to prove the desired growth
of~$\hplus_X\bigl(f^n(P)\bigr)$ in an appropriate~$\gp$-adic neighborhood,
leading to the desired conclusion.

\begin{remark}
As noted earlier, there are maps~$f:\AA^2\to\AA^2$ to which
Theorem~\ref{theorem:htwithperpt} does not apply. For example,
consider the map $f(x,y)=(y,xy)$. It is easy to see
that~$\deg(f^n)$ is the $(n+2)^{\text{nd}}$-Fibonacci number,
so~$\d_f=\frac{1+\sqrt5}{2}$. If Theorem~\ref{theorem:htwithperpt}
were valid for~$f$, then there would be an integer~$m\ge1$ and a
point~$Q_0$ on the line at infinity with the property
that~$f^m(Q_0)=Q_0$. It would follow from
Remark~\ref{remark:fixedptalgst} that~$f^m$ is algebraically stable,
so $\d_{f^m}=\deg(f^m)$, and then Lemma~\ref{lemma:degsfmvsf} would
say that $\d_f=\d_{f^m}^{1/m}=\deg(f^m)^{1/m}$ is the
$m^{\text{th}}$-root of an integer. This contradicts
$\d_f=\frac{1+\sqrt5}{2}$. Of course, for this easy example, one can
give an explicit formula for~$f^m$ and check directly that~$f^m$ has
no fixed points on the line at infinity. But our indirect argument is
more widely applicable.
\end{remark}

\begin{theorem}
\label{theorem:guedj}
\textup{(Guedj \cite{MR2097402})}
Let $f: \AA^2 \to \AA^2$ be a dominant quadratic polynomial map
defined over $\Qbar$. Then $f$ is conjugate by a $\Qbar$-linear
automorphism of $\AA^2$ to one of the maps described in
Table~$\ref{table:guedj}$.
\end{theorem}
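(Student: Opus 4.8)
The plan is to reduce the statement to a finite case analysis governed by the behavior of $f$ along the line at infinity. Write $f=(P,Q)$ with $P,Q\in\Qbar[x,y]$ of degree at most $2$, at least one of them of degree exactly $2$, and homogenize to $\bar f:\PP^2\dashrightarrow\PP^2$, $\bar f[X:Y:Z]=[F:G:Z^2]$ with $F,G$ homogeneous of degree $2$. Since $\bar f$ is already a morphism on $\{Z\ne0\}$, its indeterminacy locus $I_f$ lies in $L=\{Z=0\}$, and on $L$ the map is $[F_0:G_0]$ with $F_0(X,Y)=F(X,Y,0)$ and $G_0(X,Y)=G(X,Y,0)$; note that $F_0\ne0$ precisely when $\deg P=2$ and that $F_0=G_0=0$ is impossible since $\deg f=2$. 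The first step is to set $H=\gcd(F_0,G_0)$ and split into the three cases $\deg H\in\{0,1,2\}$. If $\deg H=0$ then $F_0,G_0$ have no common zero, $I_f=\emptyset$, $\bar f$ is a morphism of $\PP^2$, hence $f$ is algebraically stable with $\d_f=\deg f=2$, and $[F_0:G_0]:\PP^1\to\PP^1$ is a degree-$2$ endomorphism. If $\deg H=1$ then, after removing the factor $H$, the induced map $\f=[F_0/H:G_0/H]$ lies in $\PGL_2$ and $I_f$ is the single point $\{H=0\}\in L$. If $\deg H=2$ then $[F_0:G_0]=[c_F:c_G]$ is constant, so $\bar f$ contracts $L\setminus I_f$ to the point $q=[c_F:c_G:0]$.

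Within each case I would normalize using the conjugation action of the $\Qbar$-linear automorphisms of $\AA^2$; these extend to automorphisms of $\PP^2$ fixing $L$, and the induced action on $L\cong\PP^1$ realizes all of $\PGL_2$. For $\deg H=0$ the classification is governed by the conjugacy class of the degree-$2$ map induced on $L$, together with the further geometric data recorded in Guedj's table, and every such $f$ has $\d_f=2$. For $\deg H=1$, $\f\in\PGL_2$ lies in one of the classical conjugacy classes --- the identity, a nontrivial homothety $x\mapsto ax$, or a translation $x\mapsto x+b$ --- and one further distinguishes whether the indeterminacy point $\{H=0\}$ is fixed by $\f$ and, in the homothety case, whether $a$ is a root of unity; together with a rescaling of the lower-order coefficients by the remaining linear symmetries, this reduces $f$ to the normal forms of the table. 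For $\deg H=2$ one follows the orbit of the contracted point $q$ and applies Lemma~\ref{lemma:algstbiffVnnees} (to $f$ and to its iterates) to read off the degree sequence $\deg(f^n)$; the maps for which this orbit eventually meets $I_f$ are the non-algebraically-stable ones, and they display the degree drops producing $\d_f=\sqrt2$ (as for $f(x,y)=(y^2,x)$) or $\d_f=\frac{1+\sqrt5}{2}$ (as for $f(x,y)=(y,xy)$). In every branch one discards the candidates whose Jacobian $\partial_x P\,\partial_y Q-\partial_y P\,\partial_x Q$ vanishes identically, since such $f$ is not dominant.

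The difficulty is not any single deep idea but the completeness and non-redundancy of the bookkeeping: one must check that the $\PGL_2$-normalization on $L$ together with the remaining linear symmetries exhausts every branch, that the degree-drop phenomena in the cases $\deg H\ge1$ are fully accounted for (so that the list $\{1,\sqrt2,\frac{1+\sqrt5}{2},2\}$ of dynamical degrees attainable by a quadratic map is complete), and that no normal form is missing from the table. Since this analysis has been carried out in full by Guedj in \cite{MR2097402}, and since his argument is geometric and valid over any algebraically closed field of characteristic zero --- hence over $\Qbar$ --- I would not repeat the case analysis here, but simply invoke that classification and transcribe the resulting normal forms into Table~\ref{table:guedj}.
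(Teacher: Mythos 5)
Your proposal ultimately does exactly what the paper does: it invokes Guedj's classification from \cite{MR2097402} and observes that his argument, being geometric, is valid over any algebraically closed field of characteristic zero and hence over $\Qbar$ --- the paper offers no independent proof, attributing the theorem to Guedj in its statement. Your preliminary sketch of the case division by $\deg\gcd(F_0,G_0)$ is a reasonable outline but is not needed, since you (correctly) do not attempt to verify the completeness of the case analysis yourself.
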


\begin{table}[t]
\begin{center}
\renewcommand{\arraystretch}{1.15}
\begin{tabular}{|*{4}{@{\hspace{4pt}}c@{\hspace{4pt}}|}} \hline
Case & $f(x,y)$ & Conditions & $\d_f$ \\ \hline\hline
\textbf{1.1}
   & $(y+ c_1, xy + c_2)$  & $c_1, c_2 \in \Qbar$ & $\frac{1+\sqrt5}{2}$
\\ \hline
\textbf{1.2}
     & $(y+ c_1, y(y-ax) + by + c_2)$
     &\begin{tabular}{c}
          $a, b, c_1, c_2 \in \Qbar$\\ $(a, b) \neq (0,0)$\\
      \end{tabular} & $2$
\\ \hline
\textbf{2.1a}
    &$(a x+ c_1, x^2 + by + c_2)$
    &\begin{tabular}{c}
      $a, b, c_1, c_2 \in \Qbar$\\
      $a b \neq 0$\\
     \end{tabular} 
    & $1$
\\ \hline
\textbf{2.1b}
    &$(a x+ c_1, xy + c_2)$
    &\begin{tabular}{c}
       $a, c_1, c_2 \in \Qbar$ \\
       $a\neq 0$\\
     \end{tabular} 
    & $1$
\\ \hline
\textbf{2.2a}
    &$(f_1(x), f_2(x, y))$
    &\renewcommand{\arraystretch}{1}\begin{tabular}{c}
       $\deg(f_1) = 2$\\
       $\deg(f_2) = 2$\\
       $\deg_y (f_2) = 1$\\
     \end{tabular} 
    & $2$
\\ \hline
\textbf{2.2b}
    &$(f_1(x), f_2(x, y))$
    &\renewcommand{\arraystretch}{1}\begin{tabular}{c}
      $\deg(f_1) =1$ \\
      $ \deg(f_2) = 2$ \\
      $\deg_y (f_2) = 2$\\
     \end{tabular} 
    & $2$
\\ \hline
\textbf{2.2c}
    &$(y, f_2(x, y))$
    &\renewcommand{\arraystretch}{1}\begin{tabular}{c}
       $\deg(f_2) = 2$\\
       $\deg_x (f_2) = 2$\\
       $\deg_y (f_2) = 2$\\
     \end{tabular} 
    & $2$
\\ \hline
\textbf{2.2d}
    &$(x y+ c_1, x (x + ay) + bx + c_2)$
    &\begin{tabular}{c}
        $a, b, c_1, c_2 \in \Qbar$ \\
     \end{tabular} 
    & $2$
\\ \hline
\textbf{3.1}
    &$(y, x^2 + a y + c)$
    &\begin{tabular}{c}
       $a, c \in \Qbar$
     \end{tabular} 
    & $\sqrt{2}$
\\ \hline
\textbf{3.2}
    &$(a y +c_1, x (x-y) + c_2)$
    &\begin{tabular}{c}
       $a, c_1, c_2 \in \Qbar$ \\
       $a \neq 0$\\
     \end{tabular} 
    & $\frac{1+\sqrt5}{2}$
\\ \hline
\textbf{3.3}
    &$(a x^2 + b x + c_1 + y, x(y + \alpha x)+c_2)$
    &\begin{tabular}{c}
      $a, b, c_1, c_2, \alpha \in \Qbar$\\ $a \neq 0$\\
     \end{tabular} 
    & $2$
\\ \hline
\textbf{3.4}
    &$(xy + c_1, x(x + a y) + bx + c_2 + \alpha y)$
    &\begin{tabular}{c}
       $a, b,c_1, c_2, \alpha \in \Qbar$\\
       $\alpha \neq 0$\\
     \end{tabular} 
    & $2$
\\ \hline
\textbf{3.5}
    &$f$ is a  morphism of $\PP^2$ 
    & & $2$
\\ \hline
\end{tabular}
\end{center}
\caption{Guedj's classification \cite{MR2097402} of dominant quadratic
  maps $\AA^2\to\AA^2$}
\label{table:guedj}
\end{table}

\begin{proof}[Proof of Theorem~$\ref{theorem:mainthm}$\textup{(b)}]
For a given point $P = (x_0, y_0) \in \AA^2(K)$, we write
\[
  f^n(P) = (x_n, y_n).
\]
As in the proof of Theorem~\ref{theorem:htwithperpt},
our aim is to find a prime~$\gp$ of~$K$ and points~$P$ such that
\[
  \liminf_{n\to\infty} 
  \frac{\log\max\bigl\{|x_n|_\gp,|y_n|_\gp,1\bigr\}}{\d_f^n} > 0.
\]
For such~$P$ we have
\begin{align*}
  \hhatlower_f^\circ(P)
  &= \liminf_{n\to\infty} \frac{h\bigl(f^n(P)\bigr)}{\d_f^n} \\*
  &= \liminf_{n\to\infty} \frac{1}{\d_f^n[K:\QQ]}\sum_{w\in M_K} 
             \log\max\bigl\{|x_n|_\gp,|y_n|_\gp,1\bigr\} \\
  &\ge   \liminf_{n\to\infty} 
  \frac{\log\max\bigl\{|x_n|_\gp,|y_n|_\gp,1\bigr\}}{\d_f^n[K:\QQ]}  \\*
  &> 0,
\end{align*}
which suffices to prove the theorem. We fix a prime~$\gp$ such that
every nonzero coefficient of~$f$ has~$\gp$-adic norm equal to~$1$.
We consider each of the cases in Guedj's classification
(Table~\ref{table:guedj}).
\par
Theorem~\ref{theorem:mainthm}(a) covers all maps in Guedj's table that
are algebraically stable, i.e., maps satisfying~$\d_f=2$, while maps
with~$\d_f=1$ always have~$\a_f(P)=1$, by~\eqref{eqn:fundineq} and the
fact that~$\alower_f(P)\ge1$. We are thus reduced to studying maps
satisfying $1<\d_f<2$, which are Cases~1.1,~3.1, and~3.2 in Guedj's
classification.  We consider each in turn.

\Case{1.1}  
Take $P = (x_0, y_0) \in \AA^2(K)$ with $|x_0|_\gp = |y_0|_\gp > 1$, and 
to ease notation, let $R=|x_0|_\gp=|y_0|_\gp$.
For $n\geq 1$, an easy induction shows that
\[
  |x_n|_\gp =R^{F_{n+1}} \quad\text{and}\quad |y_n|_\gp =R^{F_{n+2}},
\]
where $F_n$ is the $n^{\text{th}}$ Fibonacci number.  Hence
\[
  \liminf_{n\to\infty} 
  \frac{\log\max\bigl\{|x_n|_\gp,|y_n|_\gp,1\bigr\}}{\d_f^n} 
  = \liminf_{n\to\infty} \frac{F_{n+2}\log R}{\d_f^n}  
  = \frac{\d_f^2\log R}{\sqrt5} > 0,
\]
since $\d_f=\frac12(1+\sqrt5)$ and $F_n=(\d_f^n-\d_f^{-n})/\sqrt5$.

\Case{3.1}  
Although~$f$ is not a morphism of~$\PP^2$, its second iterate
\[
 f^2 \bigl([X,Y,Z]\bigr) 
    = \bigl[X^2 + aYZ + cZ^2, a X^2 + Y^2 + a^2YZ + (a+1) cZ^2,Z^2 \bigr]
\]
extends to a morphism of $\PP^2$. Let~$\xi$ be a root of
$\xi^2-\xi+a=0$ and replace~$K$ with~$K(\xi)$. Then the
point~$[1,\xi,0]$ is a fixed point of~$f^2$ lying on the line at
infinity, so we can apply Theorem~\ref{theorem:htwithperpt} to obtain
the desired result.

\Case{3.2}  
Take $P = (x_0, y_0) \in \AA^2(K)$ with $1< |x_0|_\gp < |y_0|_\gp$,
and let $R=|y_0|_\gp$.  For $n \geq 1$, we claim that $|x_n|_\gp =
R^{F_{n+1}}$ and $|y_n| = R^{F_{n+2}}$, where $F_n$ is the $n$-th
Fibonacci number.  Indeed, by induction we find that
\begin{gather*}
  |x_{n+1}|_\gp = |a y_n + c|_\gp = |y_n|_\gp = R^{F_{n+2}} \\
  \noalign{\noindent and}
  |y_{n+1}|_\gp = |x_n (x_n - y_n) + c_2|_\gp = |x_n y_n|_\gp = R^{F_{n+1} +F_{n+2} }
  = R^{F_{n+3}}. 
\end{gather*}
Hence just as in Case~1.1 we have
\[
  \liminf_{n\to\infty} 
  \frac{\log\max\bigl\{|x_n|_\gp,|y_n|_\gp,1\bigr\}}{\d_f^n} 
  = \liminf_{n\to\infty} \frac{F_{n+2}\log R}{\d_f^n}  
  = \frac{\d_f^2\log R}{\sqrt5} > 0.
\]
The completes the proof of Theorem~\ref{theorem:mainthm}(b).
\end{proof}

\def\cprime{$'$}

\end{document} 